\documentclass[10pt,twoside]{amsart}
\usepackage{amsmath,amsthm,amscd,amssymb, amsfonts}
\usepackage{enumerate}
\usepackage{latexsym}
\usepackage{amstext}

\newtheorem{teo}{Theorem}[section]
\newtheorem{lem}[teo]{Lemma}
\newtheorem{pro}[teo]{Proposition}

\newtheorem{cor}[teo]{Corollary}

\theoremstyle{definition}
\newtheorem{defi}[teo]{Definition}

\newtheorem{remark}[teo]{Remark}

\newcommand{\D}{\mathbb{D}}

\begin{document}

\title{Toeplitz Operators on Weighted Bergman Spaces}
\author{Gerardo R. Chac\'on}
\address{Departamento de Matem\'aticas, Pontificia Universidad Javeriana, Bogot\'a, Colombia}
\email{chacong@javeriana.edu.co}
\subjclass[2000]{Primary 47B35; Secondary 32A36}
\keywords{Toeplitz Operators, Bergman Spaces, Bekollé-Bonami weights}
\maketitle
\begin{abstract}
In this article we characterize the boundedness and compactness of a Toeplitz-type operator on weighted Bergman spaces satisfying the so-called Bekoll\'e-Bonami condition in terms of the Berezin transform.
\end{abstract}

\vskip.2in

\section{Introduction}
Let $\D$ denote the unit disc in the complex plane and $dA$ the normalized Lebesgue measure on $\D$. For a given nonnegative integrable function $u$ on $\D$ we define the {\it weighted Bergman space} $A^2(u)$ as the space of all analytic functions on $\D$ that belong to the weighted space $L^2(u)$. That is, $f\in A^2(u)$ if $f$ is analytic on $\D$ and satisfies: $$\|f\|_{A^2(u)}^2:=\int_\D |f(z)|^2u(z)dA(z).$$

Weighted Bergman spaces have been studied by several authors in different contexts (see for example \cite{Ha},\cite{OP},\cite{St}, and \cite{Z}). Most of the research about Toeplitz-type operators in this spaces have been done by considering radial weights. In this article we will consider weights satisfying the the so-called B\'ekoll\'e-Bonami condition.

\begin{defi}
 A function $u$ in $L^1(\D)$ is said to satisfy the Bekolle-Bonami condition $B_2$ if there exists a constant $C>0$  such that $$\frac{1}{(A(S(I)))^2}\int_{S(I)} u(z)dA(z)\int_{S(I)} \frac{1}{u(z)}dA(z)\leq C $$ for every interval $I\subset \partial\D$. Here, $S(I)$ denotes the {\it Carleson square}: $$S(I)=\{re^{it}:e^{it}\in I, \quad 1-\frac{|I|}{2\pi}\leq r<1\}.$$

\end{defi}

 Condition $B_2$ was introduced by Bekollé and Bonami in \cite{BB}. They showed that $B_2$ is necessary and sufficient for the Bergman projection to be bounded on $L^2(udA)$. Condition $B_2$ was used by Luecking in \cite{Lu2, Lu} to study Carleson measures in weighted Bergman spaces. Then in \cite{NY} Nakazi and Yamada generalized Luecking's results by introducing a more restrictive condition $(A_2)_\partial$ and found, under this condition, a characterization for Carleson measures on $A^2(u)$ spaces. They also gave several examples of weights satisfying this condition. In \cite{C2}, Constantin generalized Luecking's result and studied Toeplitz-type operators on spaces $A^2(u)$. The characterization of Carleson measures is as follows.

\begin{teo}[\cite{Lu2}, \cite{C2}]\label{t2}
Suppose $u>0$ a.e. in $\D$ and satisfies the $B_2$ condition, let $d\mu=udA$ and let $\nu$ be a positive
Borel measure on $\D$. Then the following are equivalent.
\begin{enumerate}
\item There exist a constant $C>0$ such that $$\int_{\D} |f|^2 d\nu \leq C\int_{\D} |f|^2
d\mu$$ for every polynomial $f$ i.e. $\nu$ is a $\mu$-Carleson measure.

\item There exists $r>0$ and $\gamma>0$ satisfying $$\nu(D_r(a))\leq\gamma
\mu(D_r(a))$$ for all $a\in \D$.

\end{enumerate}
\end{teo}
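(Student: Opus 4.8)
The plan is to prove the two implications separately, with the analytic heart being a single pointwise estimate that converts the $B_2$ condition on Carleson squares into weighted control of analytic functions on the Bergman (pseudohyperbolic) disks $D_r(a)$. First I would record two preliminary facts. Since every Bergman disk $D_r(a)$ is sandwiched between two Carleson squares of comparable area — one containing it and one contained in a fixed dilate — the $B_2$ inequality transfers to disks, giving $A(D_r(a))^{-2}\int_{D_r(a)}u\,dA\int_{D_r(a)}u^{-1}\,dA\le C'$. From the $B_2$ condition I would also extract that $d\mu=u\,dA$ is doubling along the hyperbolic scale, $\mu(D_{2r}(a))\le C\mu(D_r(a))$, which lets me enlarge disks freely at the cost of constants.

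The key lemma is the estimate
$$|f(z)|^2\le \frac{C}{\mu(D_r(z))}\int_{D_r(z)}|f|^2\,d\mu,$$
valid for every analytic $f$. To prove it I would apply the sub-mean value property of the subharmonic function $|f|$ over a Euclidean disk comparable to $D_r(z)$, then factor $|f|=(|f|u^{1/2})u^{-1/2}$ and apply Cauchy--Schwarz; the resulting factor $\int_{D_r(z)}u^{-1}\,dA$ is absorbed using the disk form of $B_2$, producing the weight $\mu(D_r(z))^{-1}$. I expect this lemma to be the main obstacle: the whole argument hinges on arranging the Cauchy--Schwarz splitting so that exactly the quantity $\int u^{-1}$ appears and is controlled by $B_2$. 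Once the condition is correctly localized to Bergman disks and doubling is in hand, both implications reduce to standard covering and test-function arguments.

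For (2) $\Rightarrow$ (1) I would fix a separated lattice $\{a_k\}$ for which the disks $D_r(a_k)$ cover $\D$ and the dilates $D_{2r}(a_k)$ have bounded overlap $N$. Writing $\int_\D|f|^2\,d\nu\le\sum_k\nu(D_r(a_k))\sup_{D_r(a_k)}|f|^2$, I would bound each supremum via the key lemma, enlarging $D_r(z)$ to $D_{2r}(a_k)$ for $z\in D_r(a_k)$ and using doubling to replace $\mu(D_r(z))$ by $\mu(D_{2r}(a_k))$. Invoking hypothesis (2) together with the doubling of $\mu$ then gives $\nu(D_r(a_k))\sup_{D_r(a_k)}|f|^2\lesssim\gamma\int_{D_{2r}(a_k)}|f|^2\,d\mu$, and summing with bounded overlap yields the Carleson inequality with constant $\lesssim\gamma N$.

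For (1) $\Rightarrow$ (2) I would use the explicit test functions $f_a(z)=\bigl((1-|a|^2)/(1-\bar a z)\bigr)^s$ with $s$ large. These are bounded and continuous on $\overline{\D}$, hence uniform limits of their Taylor polynomials, so applying (1) to the partial sums and passing to the limit (Fatou on the left, dominated convergence on the right) gives $\int_\D|f_a|^2\,d\nu\le C\int_\D|f_a|^2\,d\mu$. On $D_r(a)$ one has $|f_a|^2\asymp 1$, so the left side dominates $\nu(D_r(a))$; on the right, splitting $\D$ into hyperbolic annuli about $a$ and using that doubling forces only geometric growth of $\mu$ across annuli (beaten by taking $s$ large) gives $\int_\D|f_a|^2\,d\mu\lesssim\mu(D_r(a))$. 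Combining the two estimates yields $\nu(D_r(a))\le\gamma\mu(D_r(a))$ with $\gamma$ independent of $a$, completing the equivalence.
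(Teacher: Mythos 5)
The paper does not actually prove this theorem: it is quoted from \cite{Lu2} and \cite{C2} and used as a black box, so there is no internal proof to compare against. Judged on its own, your argument is correct and is essentially the classical one from those references. The analytic heart is exactly where you locate it: the lemma $|f(z)|^2\le C\,\mu(D_r(z))^{-1}\int_{D_r(z)}|f|^2\,d\mu$, obtained from the sub-mean-value inequality for $|f|$ on a Euclidean disk centered at $z$ of radius comparable to $1-|z|$ inside $D_r(z)$, then Cauchy--Schwarz with the splitting $|f|=(|f|u^{1/2})u^{-1/2}$, then the disk form of $B_2$; the transfer of $B_2$ from Carleson squares to pseudohyperbolic disks is legitimate because each $D_r(a)$ is contained in a square of comparable area and both integrands are nonnegative, and the doubling of $\mu$ follows as you say by combining the disk $B_2$ inequality on the large disk with Cauchy--Schwarz ($A(E)^2\le\int_E u\int_E u^{-1}$) on the small one. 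Two points you should make explicit to close the argument. First, in (2)$\Rightarrow$(1) the radius in hypothesis (2) need not match the lattice radius of your cover, so you must note that doubling of $\mu$ (together with a finite covering of a larger disk by smaller ones) transports the pointwise condition between radii. Second, in (1)$\Rightarrow$(2) you should first test (1) on $f\equiv 1$ to see that $\nu$ is finite, after which the passage from the Taylor partial sums of $f_a(z)=\bigl((1-|a|^2)/(1-\bar a z)\bigr)^s$ to $f_a$ itself is justified by uniform convergence on $\overline{\D}$; the annulus decomposition with $s$ chosen large relative to the doubling constant then gives $\int_{\D}|f_a|^2\,d\mu\lesssim\mu(D_r(a))$ as claimed. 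With those remarks added, the proof is complete.
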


\begin{teo}[\cite{Lu}]\label{t3}
Suppose $u\in L^1(dA)$ satisfies condition $B_2$, then the dual of $A^2(u)$ can be identified with $A^2(u^{-1})$. The pairing is given by $$\langle f,g \rangle = \int_{\D} f\overline{g}dA$$
\end{teo}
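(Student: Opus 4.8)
The plan is to set up the bilinear pairing and show it induces a topological isomorphism in two stages: first that every $g\in A^2(u^{-1})$ defines a bounded functional on $A^2(u)$, and second (the substantive direction) that every bounded functional arises in this way. The key structural observation I would exploit is that the $B_2$ condition is \emph{symmetric} in $u$ and $u^{-1}$: the defining inequality involves the product $\int_{S(I)}u\,dA\cdot\int_{S(I)}u^{-1}\,dA$, so $u^{-1}$ satisfies $B_2$ with the same constant, and by the Bekoll\'e--Bonami theorem the Bergman projection $P$ is therefore bounded on \emph{both} $L^2(u\,dA)$ and $L^2(u^{-1}\,dA)$. Taking $I=\partial\D$ gives $S(I)=\D$, so both $\int_\D u\,dA$ and $\int_\D u^{-1}\,dA$ are finite; in particular $u>0$ a.e., and by Cauchy--Schwarz $A^2(u),A^2(u^{-1})\subseteq L^1(dA)$, whence the reproducing identity $Pf=f$ holds for every $f\in A^2(u)$ and all the pairing integrals converge absolutely.

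For the first stage, given $f\in A^2(u)$ and $g\in A^2(u^{-1})$, writing $f\bar g=(fu^{1/2})(\bar g u^{-1/2})$ and applying Cauchy--Schwarz yields $|\langle f,g\rangle|\le\|f\|_{A^2(u)}\|g\|_{A^2(u^{-1})}$, so $\Lambda_g(f):=\int_\D f\bar g\,dA$ is bounded with $\|\Lambda_g\|\le\|g\|_{A^2(u^{-1})}$. Injectivity of $g\mapsto\Lambda_g$ follows by testing against monomials, since $\int_\D z^n\bar g\,dA$ is a nonzero multiple of the conjugate of the $n$th Taylor coefficient of $g$, so $\Lambda_g=0$ forces $g=0$.

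For the second stage, note that since $P$ is bounded on $L^2(u\,dA)$ with range the analytic functions, $A^2(u)$ is a closed subspace of the Hilbert space $L^2(u\,dA)$. Given $\Lambda\in(A^2(u))^{*}$, the Riesz representation theorem produces $h\in A^2(u)$ with $\Lambda(f)=\int_\D f\bar h\,u\,dA$; setting $\psi:=uh$ one checks $\psi\in L^2(u^{-1}\,dA)$ and $\Lambda(f)=\int_\D f\bar\psi\,dA$. The candidate representative is $g:=P\psi$, which lies in $A^2(u^{-1})$ precisely because $P$ is bounded on $L^2(u^{-1}\,dA)$, with $\|g\|_{A^2(u^{-1})}\lesssim\|h\|_{A^2(u)}=\|\Lambda\|$. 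It then remains to verify that $\Lambda(f)=\int_\D f\bar g\,dA$ for every $f\in A^2(u)$, that is, $\int_\D f\,\overline{(\psi-P\psi)}\,dA=0$.

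This last identity is the crux, and the step I expect to be the main obstacle. It amounts to showing that $P$ is self-adjoint with respect to the $dA$-pairing between $L^2(u\,dA)$ and $L^2(u^{-1}\,dA)$, namely $\int_\D(Pf)\bar k\,dA=\int_\D f\,\overline{Pk}\,dA$ for all $f\in L^2(u\,dA)$ and $k\in L^2(u^{-1}\,dA)$; applying this with $k=\psi$ and using $Pf=f$ gives $\int_\D f\overline{P\psi}\,dA=\int_\D(Pf)\bar\psi\,dA=\int_\D f\bar\psi\,dA=\Lambda(f)$. The delicate point is that the Bergman kernel is not absolutely integrable against these weights, so a direct Fubini argument on the double integral is unavailable. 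Instead I would pass to truncations $f_m=f\mathbf{1}_{\{u\ge 1/m\}}$ and $k_n=k\mathbf{1}_{\{u\ge 1/n\}}$, which lie in $L^2(dA)\cap L^2(u^{\pm1}\,dA)$ and converge to $f$ and $k$ in the respective weighted norms (this is where $u>0$ a.e.\ is used). On the truncations the identity reduces to the ordinary self-adjointness of the orthogonal projection $P$ on the unweighted space $L^2(dA)$, and letting $m,n\to\infty$ using the boundedness of $P$ on each weighted space together with Cauchy--Schwarz recovers the full identity. Finally, combining $\|\Lambda_g\|\le\|g\|_{A^2(u^{-1})}\lesssim\|\Lambda\|$ shows that $g\leftrightarrow\Lambda_g$ is a topological isomorphism, which completes the identification.
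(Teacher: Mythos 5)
The paper does not prove this statement: it is imported verbatim from Luecking \cite{Lu}, so there is no in-paper proof to compare against. Your argument is, in substance, the standard (and Luecking's own) route to this duality: exploit the symmetry of the $B_2$ condition under $u\mapsto u^{-1}$ to get the Bergman projection $P$ bounded on both $L^2(u\,dA)$ and $L^2(u^{-1}dA)$, represent a functional via Riesz on the Hilbert space $L^2(u\,dA)$, and project the resulting density into $A^2(u^{-1})$. The skeleton is correct, and you rightly isolate the genuine crux, namely the self-adjointness of $P$ with respect to the unweighted pairing between the two weighted spaces, which cannot be obtained by a naive Fubini argument. Two details deserve attention. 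First, your truncation for $k\in L^2(u^{-1}dA)$ is off: you set $k_n=k\mathbf{1}_{\{u\ge 1/n\}}$, but on that set $u$ need not be bounded above, so $\int|k_n|^2\,dA=\int|k_n|^2u^{-1}\cdot u\,dA$ is not controlled; the correct truncation is $k_n=k\mathbf{1}_{\{u\le n\}}$ (dually, $f_m=f\mathbf{1}_{\{u\ge 1/m\}}$ is the right choice for $f\in L^2(u\,dA)$). With that one-character fix the limiting argument goes through, since $P f_m\to Pf$ in $L^2(u\,dA)$, $Pk_n\to Pk$ in $L^2(u^{-1}dA)$, and the pairing is continuous in these norms by Cauchy--Schwarz. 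Second, the reproducing identity $Pf=f$ for $f\in A^2(u)$ needs the observation that $A^2(u)\subset A^1(dA)$ (via $u^{-1}\in L^1$, which itself requires letting the arcs $I$ exhaust $\partial\D$ in the $B_2$ inequality) together with the reproducing property on $A^1$; you note this, and it is correct but worth making explicit since $P$ is not bounded on $L^1$. Modulo these points the proof is complete and matches the known argument; it buys nothing beyond the cited result, but it does make the paper self-contained on this point.
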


\begin{remark}
Notice that if $u$ satisfies the $B_2$ condition, then it is not hard to show (see for example \cite{Lu}) that there is a constant $C>0$ such that for any function $f$ analytic in $\D$ and any $a\in \D$, it holds that $$|f(a)|^2 \leq C\left(\int_{D_r(a)} udA\right)^{-1} \|f\|_{A^2(u)}^2.$$ Consequently, evaluation functionals are bounded on $A^2(u)$ and so for every complex number $a\in \D$ there exists a function $K_u(\cdot,a) \in A^2(u)$ such that $$\langle f, K_u(\cdot, a) \rangle_{A^2(u)} = f(a)$$ for every function $f\in A^2(u)$; i.e. the functions $K_u(\cdot,a)$ are the {\it reproducing kernels} for $A^2(u)$. Notice also that the set $\{K_u(\cdot, w):w\in\D\}$ is dense in $A^2(u)$.

Moreover, using theorem \ref{t3}, we have that there exists a bounded, bijective, linear operator $F_1:A^2(u^{-1})\to(A^2(u))^\ast$ such that $F_1(f)(g)=\langle g, \overline{f} \rangle_{A^2}$ for every $f\in A^2(u^{-1})$ and $g\in A^2(u)$. On the other hand, since $A^2(u)$ is a Hilbert space, there exists a bounded, bijective, linear operator $F_2:(A^2(u))^\ast\to A^2(u)$ such that $\langle g, \overline{F_2 (\gamma)}\rangle_{A^2(u)} = \gamma(g)$ for every $\gamma\in (A^2(u))^\ast$ and $g\in A^2(u)$.

Let $\gamma_a\in (A^2(u))^\ast$ denote the evaluation functional at $a$; then $F_2(\gamma_a)=K_u(\cdot,a)$. On the other hand, if we let $B_a(z)$, with $a,z\in\D$, denote the {\it Bergman kernel} \[B_a(z):=\frac{1}{(1-\overline{a}z)^2},\] then since $B_a\in A^2(u^{-1})$, and the pairing is given by the $A^2$ product, we have that $F_1(B_a)=\gamma_a$.

Define $T:A^2(u)\to A^2(u^{-1})$ by $T=F_1^{-1}F_2^{-1}$. Then $T$ is a bounded operator and $T(K_u(\cdot,a))=F_1^{-1}(\gamma_a)=B_a$. Also,  the operator $T^{-1}:A^2(u^{-1})\to A^2(u)$ is well
defined and consequently we have that $$\|B_a\|_{A^2(u^{-1})}\sim\|K_u(\cdot,a)\|_{A^2(u)}.$$
\end{remark}

\begin{defi}
Let $\nu$ be a positive Borel measure on $\D$. Define for each polynomial $p$ the Toeplitz operator $T_\nu$ $$T_\nu p (z):= \int_{\D} K_u(z,w)p(w)d\nu(w).$$
\end{defi}

We will study the conditions under which Toeplitz operators can be extended to $A^2(u)$ and we will characterize the boundedness and compactness of Toeplitz operators acting on the $A^2(u)$ spaces in terms of the so-called Berezin trasform of the measure $\nu$.

\begin{defi}
Let $\nu$ be a positive Borel measure on $\D$, the Berezin transform $\tilde{\nu}$ of $\nu$ is defined as $$\tilde{\nu}(a):=\frac{1}{\|B_a\|_{A^2(u)}}\int_{\D} |B_a(z)|^2d\nu(z).$$

\end{defi}

From now on, we will assume that $\nu$ is a probability measure.

  The following result is well known (see \cite{Ha}, \cite{OP},\cite{St} and \cite{Z}):

\begin{teo}\label{t1}
Supppose $\mu$ is a finite positive Borel measure on $\D$. Then the following are equivalent (here, $dA_\alpha(z)$ represents the standard weighted Lebesgue measure $(1-|z|^2)dA(z)$, $\alpha>-1$):
\begin{itemize}
\item[(a)] $T_\mu$ is bounded on $A^2(dA_\alpha)$.
\item[(b)] $\tilde{\mu}$ is a bounded function on $\D$.
\item[(c)] $\mu$ is a Carleson measure for $A^2(dA_\alpha)$
\end{itemize}
\end{teo}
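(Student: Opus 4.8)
The plan is to prove the cycle of implications $(c)\Rightarrow(a)\Rightarrow(b)\Rightarrow(c)$, with Theorem \ref{t2} doing the decisive work in the last step. The common thread is the sesquilinear-form representation of the Toeplitz operator. For polynomials $f,g$, writing $K^{(\alpha)}$ for the reproducing kernel of $A^2(dA_\alpha)$ and applying Fubini's theorem together with the reproducing property, I would first establish
$$\langle T_\mu f,g\rangle_{A^2(dA_\alpha)}=\int_\D f(w)\overline{g(w)}\,d\mu(w),$$
so that in particular $\langle T_\mu f,f\rangle_{A^2(dA_\alpha)}=\int_\D|f|^2\,d\mu$. Thus boundedness of $T_\mu$ is equivalent to boundedness of this form, which is the bridge to both the Carleson condition and the Berezin transform.

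For $(c)\Rightarrow(a)$ I would argue directly from the form. By Cauchy--Schwarz in $L^2(\mu)$ and the Carleson estimate applied to $f$ and to $g$,
$$|\langle T_\mu f,g\rangle_{A^2(dA_\alpha)}|\le\Big(\int_\D|f|^2\,d\mu\Big)^{1/2}\Big(\int_\D|g|^2\,d\mu\Big)^{1/2}\le C\,\|f\|_{A^2(dA_\alpha)}\|g\|_{A^2(dA_\alpha)};$$
taking the supremum over $\|g\|_{A^2(dA_\alpha)}\le1$ gives $\|T_\mu f\|\le C\|f\|$, and density of the polynomials extends $T_\mu$ to a bounded operator. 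For $(a)\Rightarrow(b)$ I would test the form on the normalized reproducing kernels $k_a=K^{(\alpha)}_a/\|K^{(\alpha)}_a\|$: since $\tilde\mu(a)$ agrees, up to the normalization built into its definition, with $\langle T_\mu k_a,k_a\rangle=\int_\D|k_a|^2\,d\mu$, the bound $|\tilde\mu(a)|\le\|T_\mu\|$ follows uniformly in $a\in\D$ from $\|k_a\|=1$.

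The substantive step is $(b)\Rightarrow(c)$, and here I would route through the disc condition (2) of Theorem \ref{t2} rather than the Carleson inequality itself. The key geometric input is that on a fixed pseudohyperbolic disc $D_r(a)$ one has $|1-\overline{a}z|\sim 1-|a|^2$, so that the normalized kernel satisfies $|k_a(z)|^2\gtrsim(1-|a|^2)^{-(2+\alpha)}\sim A_\alpha(D_r(a))^{-1}$ there, where $A_\alpha(D_r(a))=\int_{D_r(a)}(1-|z|^2)^\alpha\,dA\sim(1-|a|^2)^{2+\alpha}$. Discarding the integral outside $D_r(a)$ in the definition of $\tilde\mu$ then yields
$$\tilde\mu(a)\ge\int_{D_r(a)}|k_a|^2\,d\mu\gtrsim\frac{\mu(D_r(a))}{A_\alpha(D_r(a))},$$
so a bounded Berezin transform forces $\mu(D_r(a))\le\gamma\,A_\alpha(D_r(a))$ for every $a\in\D$. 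This is precisely condition (2) of Theorem \ref{t2} with the role of the base measure played by $dA_\alpha$, and that theorem delivers the Carleson property $(c)$, closing the loop.

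The hard part will be the uniform control of the kernel asymptotics in $(b)\Rightarrow(c)$: one must verify $|1-\overline{a}z|\sim1-|a|^2$ on $D_r(a)$ and the weighted-area estimate $A_\alpha(D_r(a))\sim(1-|a|^2)^{2+\alpha}$ uniformly for all $a\in\D$ (including $a$ near the boundary), and confirm that the normalization appearing in the definition of $\tilde\mu$ combines with these to produce exactly the ratio $\mu(D_r(a))/A_\alpha(D_r(a))$. By comparison, the implications $(c)\Rightarrow(a)$ and $(a)\Rightarrow(b)$ are soft, using only Cauchy--Schwarz, evaluation on unit-norm kernels, and density of polynomials in $A^2(dA_\alpha)$.
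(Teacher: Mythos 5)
The paper does not actually prove Theorem \ref{t1}: it is quoted as a known result with references to \cite{Ha}, \cite{OP}, \cite{St} and \cite{Z}, accompanied only by the remark that the proof ``relies on the behavior of the reproducing kernels $b_z(w)$ when $z$ is close to $w$ and on the characterization of Carleson measures for the Bergman space.'' Your outline is exactly that standard argument, and it is also the same scheme the author later executes for the weighted case: the form identity $\langle T_\mu f,g\rangle=\int f\overline{g}\,d\mu$ (compare equation (\ref{toepeq1}) and Corollary \ref{cinner}), Cauchy--Schwarz plus the Carleson embedding for (c)$\Rightarrow$(a) (Proposition \ref{p4}), testing on normalized kernels for (a)$\Rightarrow$(b) (Theorem \ref{tboundbere}), and the lower bound $\tilde\mu(a)\gtrsim\mu(D_r(a))/A_\alpha(D_r(a))$ for (b)$\Rightarrow$(c) (Proposition \ref{p3} and the implication (b)$\Rightarrow$(c) in Section 4). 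So the approach is sound and consistent with the paper. One correction: you should not route (b)$\Rightarrow$(c) through Theorem \ref{t2}, because that theorem assumes the weight satisfies $B_2$, and $(1-|z|^2)^\alpha$ satisfies $B_2$ only for $-1<\alpha<1$ (for $\alpha\ge 1$ the integral of $u^{-1}$ over a Carleson square diverges). For general $\alpha>-1$ you must instead invoke the classical Carleson measure theorem for $A^2(dA_\alpha)$ (the disc condition $\mu(D_r(a))\lesssim(1-|a|^2)^{2+\alpha}$ is equivalent to the embedding; see \cite{Z}), which is independent of any $B_2$ hypothesis. Two smaller points worth a line each in a full write-up: the Fubini step defining the sesquilinear form needs an absolute-integrability justification, and the paper's Berezin transform in this theorem is written with the $\alpha=0$ kernel $b_z$, whereas your normalized kernel $k_a$ is the $\alpha$-kernel --- your choice is the one that actually makes (b) equivalent to (a) and (c) for $\alpha\neq0$, so you are implicitly repairing a sloppiness in the statement and should say so explicitly.
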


Where, $$\tilde{\mu}(z)=\int_{\D} |b_z(w)|^2 d\mu(w), \qquad z\in \D$$ is the Berezin transform of the Toeplitz operator, here $$b_z(w):=\frac{1-|z|^2}{(1-\overline{z}w)^2}.$$

The proof of this result mainly relies on the behavior of the reproducing kernels $b_z(w)$ when $z$ is close to $w$ and on the characterization of Carleson measures for the Bergman space. In the case of $A^2(u)$ spaces, such a characterization is known for Carleson measures (see for example \cite{C2}) but we do not have an explicit formula for the reproducing kernels, consequently we will need to use a different technique. This will be an atomic decomposition developed in terms of the reproducing kernels of $A^2(u)$.

\section{Atomic decomposition for $A^2(u)$}

In this section, we develop a way of expressing functions in $A^2(u)$ as a linear combination of reproducing kernels. Similar problems have been studied in \cite{C} and \cite{Lu}. We will use a similar reasoning as in \cite{Lu}, where the problem is studied in terms of the Bergman kernel. We will assume in the rest of this article that $\mu :=udA$ is a probability measure.

\begin{defi}
Let $\varepsilon>0$. A sequence $\{a_n\} \subset \D$ is called $\varepsilon$-separated if $\inf\{\rho (a_n, a_m):n\neq m\}\geq \varepsilon >0$. Here, $\rho$ denotes the pseudohyperbolic metric: $\rho(z,w):=\displaystyle \frac{|z-w|}{|1-\overline{z}w|}$.
\end{defi}

\begin{teo}[\cite{Lu}]\label{tludecomp}
If $u$ satisfies condition $B_2$, then there exists an $\varepsilon$-separated sequence $\{a_n\}$ such that $$\|f\|^2_{A^2(u)}\sim \sum_k |f(a_n)|^2\mu(D_r(a_n))$$ for every function $f\in A^2(u)$.
\end{teo}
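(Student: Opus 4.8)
The plan is to reduce the norm equivalence to two local estimates on pseudohyperbolic disks and then sum them using a covering with bounded overlap. Throughout I would freely use that the $B_2$ condition, stated on Carleson squares, is equivalent up to constants to the same inequality on pseudohyperbolic disks $D_r(a)$, and that it forces $\mu=u\,dA$ to be doubling on such disks; both follow from the comparability of Carleson squares and pseudohyperbolic disks. First I would construct the sequence: take $\{a_n\}$ to be a \emph{maximal} $r$-separated set (which exists by a greedy/Zorn argument). Maximality gives that $\{D_r(a_n)\}_n$ covers $\D$, while the disjointness of the disks $D_{r/2}(a_n)$ --- a consequence of the triangle inequality for the pseudohyperbolic metric $\rho$ --- together with the comparability of the areas of $D_{r/2}(a_n)$ and $D_{2r}(a_n)$ yields a bound $N=N(r)$ on the number of indices $n$ for which a fixed point $z$ can lie in $D_r(a_n)$. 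Thus $\{D_r(a_n)\}$ is simultaneously a cover and a family of bounded overlap, and this $\{a_n\}$ is $\varepsilon$-separated with $\varepsilon=r$.

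The upper bound $\sum_n|f(a_n)|^2\mu(D_r(a_n))\lesssim\|f\|_{A^2(u)}^2$ rests on the local pointwise estimate
\[
|f(a)|^2\,\mu(D_r(a))\le C\int_{D_r(a)}|f|^2\,d\mu,\qquad(\star)
\]
valid for every analytic $f$ and every $a\in\D$. To obtain $(\star)$ I would use that $|f|$ is subharmonic, so the sub-mean-value property on a Euclidean disk comparable to $D_r(a)$ gives $|f(a)|\le \tfrac{C}{A(D_r(a))}\int_{D_r(a)}|f|\,dA$; then by Cauchy--Schwarz,
\[
|f(a)|^2\le \frac{C}{A(D_r(a))^2}\Big(\int_{D_r(a)}|f|^2u\,dA\Big)\Big(\int_{D_r(a)}u^{-1}\,dA\Big),
\]
and the $B_2$ inequality on $D_r(a)$ bounds $A(D_r(a))^{-2}\int_{D_r(a)}u^{-1}\,dA$ by $C\,\mu(D_r(a))^{-1}$, which is exactly $(\star)$ (this is also the estimate behind the Remark). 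Summing $(\star)$ over $n$ and invoking the bounded overlap $\sum_n\chi_{D_r(a_n)}\le N$ gives $\sum_n|f(a_n)|^2\mu(D_r(a_n))\le C\sum_n\int_{D_r(a_n)}|f|^2\,d\mu\le CN\|f\|_{A^2(u)}^2$.

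For the lower bound I would start from the cover, $\|f\|_{A^2(u)}^2\le\sum_n\int_{D_r(a_n)}|f|^2\,d\mu$, and split $|f(z)|^2\le 2|f(a_n)|^2+2|f(z)-f(a_n)|^2$ on each $D_r(a_n)$. The first term sums to $2\sum_n|f(a_n)|^2\mu(D_r(a_n))$, so it remains to absorb the oscillation term. The key is the estimate
\[
\int_{D_r(a)}|f-f(a)|^2\,d\mu\le \eta(r)\int_{D_{r'}(a)}|f|^2\,d\mu,\qquad \eta(r)\to 0\ \text{as}\ r\to0,\qquad(\ddagger)
\]
for a fixed $r'>r$. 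To prove $(\ddagger)$ I would transport to the origin via $\varphi_a(z)=\frac{a-z}{1-\overline{a}z}$, set $g=f\circ\varphi_a$, and use a Cauchy estimate for $g-g(0)$ on $\{|w|<r\}$ to gain a factor that vanishes with $r$; transplanting back produces a pointwise bound of $|f(z)-f(a)|^2$ by $\eta(r)$ times the $dA$-average of $|f|^2$ over $D_{r'}(a)$, and this $dA$-average is converted into $\int_{D_{r''}(a)}|f|^2\,d\mu$ by integrating $(\star)$ pointwise and using the doubling of $\mu$. Summing $(\ddagger)$ with bounded overlap of $\{D_{r'}(a_n)\}$ bounds the oscillation sum by $\eta(r)N'\|f\|_{A^2(u)}^2$; choosing $r$ small enough that $2\eta(r)N'\le\tfrac12$ lets me absorb it into the left-hand side and conclude $\|f\|_{A^2(u)}^2\le 4\sum_n|f(a_n)|^2\mu(D_r(a_n))$.

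The main obstacle is the oscillation estimate $(\ddagger)$: one must produce a constant $\eta(r)$ that genuinely tends to $0$ as $r\to0$ while keeping the auxiliary radii $r'$, $r''$ fixed, and one must pass from the Euclidean sub-mean-value and Cauchy estimates (which naturally live against $dA$) back to the weighted measure $d\mu$. This passage is precisely where the $B_2$ condition and the doubling of $\mu$ are indispensable, and it is the step demanding the most care; once $(\star)$ and $(\ddagger)$ are in hand, the covering and absorption arguments are routine.
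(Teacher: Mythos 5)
The paper does not actually prove Theorem \ref{tludecomp}: it is quoted from Luecking \cite{Lu}, whose argument rests on the boundedness of Bergman-type integral operators on $L^2(u\,dA)$ (the Bekoll\'e--Bonami theorem) and an approximation-of-the-identity scheme, not on a covering/absorption argument. So your proposal has to stand on its own. The upper estimate does: your $(\star)$ is exactly the pointwise bound recorded in the paper's Remark, and summing it over a bounded-overlap cover is harmless because for a \emph{fixed} $r$ all the constants involved are fixed.

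The lower estimate has a genuine gap at the absorption step. Your oscillation bound $(\ddagger)$ uses a dilated disk $D_{r'}(a)$ of \emph{fixed} radius $r'$, and the Cauchy/Schwarz-type estimate then yields $\eta(r)\asymp (r/r')^2$. But the overlap constant $N'$ of the family $\{D_{r'}(a_n)\}$ is not independent of $r$: the centers are only $r$-separated, so a single point can lie in $D_{r'}(a_n)$ for $\asymp (r'/r)^2$ indices $n$ (the disjoint disks $D_{r/2}(a_n)$ have hyperbolic area $\asymp r^2$ inside a disk of fixed hyperbolic area). Hence $\eta(r)N'\asymp 1$, and ``choosing $r$ small enough that $2\eta(r)N'\le\tfrac12$'' is impossible; taking $r'$ proportional to $r$ instead keeps $N'$ bounded but forces $\eta\asymp 1$, so the product is $O(1)$ either way and the oscillation term cannot be absorbed. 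A second, related problem is your passage from $dA$-averages back to $d\mu$ via ``$B_2$ on pseudohyperbolic disks'' and ``doubling of $\mu$'' with constants uniform in the radius. This fails for Bekoll\'e--Bonami weights: $B_2$ is a condition only over Carleson squares, which are anchored at the boundary, so the smallest square containing $D_r(a)$ has area $\asymp(1-|a|)^2$ no matter how small $r$ is, and the transferred local constants degenerate like $r^{-4}$. (In the unweighted case the absorption scheme does close, because one can route the oscillation through $f'$, use the sub-mean-value property at scale $r$, and finish with the Littlewood--Paley identity, retaining one clean factor of $r^2$; it is exactly this chain that breaks for a general $B_2$ weight.) To repair the argument you would need either Luecking's operator-theoretic route or a weighted Littlewood--Paley inequality combined with scale-uniform local estimates that $B_2$ by itself does not provide.
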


\begin{teo}\label{Thmatomdecomp}
Suppose $u$ satisfies the condition $B_2$. Then there exists a sequence $\{a_n\}\subset \D$ which is $\varepsilon$-separated for some $\varepsilon>0$, such that any $f\in A^2(u)$ has the form
\begin{equation}\label{eqdecomp}
f(z)=\sum_n c_n K_u(\cdot, a_n)\mu(D_r(a_n))^{1/2}
\end{equation}
for some sequence $\{c_n\} \in l^2$
\end{teo}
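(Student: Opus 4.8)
The plan is to realize the synthesis map from $\ell^2$ to $A^2(u)$ and exhibit $f$ as its image, using duality (Theorem \ref{t3}) together with the sampling equivalence of Theorem \ref{tludecomp}. The natural candidate operator is
\[
S:\ell^2\to A^2(u),\qquad S(\{c_n\})=\sum_n c_n K_u(\cdot,a_n)\,\mu(D_r(a_n))^{1/2},
\]
where $\{a_n\}$ is the $\varepsilon$-separated sequence furnished by Theorem \ref{tludecomp}. To show every $f\in A^2(u)$ is of the form $S(\{c_n\})$, it suffices to show $S$ is \emph{surjective}, and for that I would prove that its Hilbert-space adjoint $S^\ast$ is bounded below.

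First I would compute $S^\ast$. Pairing $S(\{c_n\})$ against $g\in A^2(u)$ in the $A^2(u)$ inner product and using the reproducing property $\langle g,K_u(\cdot,a_n)\rangle_{A^2(u)}=\overline{g(a_n)}$ (wait—with the convention in the Remark, $\langle f,K_u(\cdot,a)\rangle_{A^2(u)}=f(a)$, so one gets the conjugate on the appropriate side), I would identify
\[
S^\ast g=\bigl\{\,g(a_n)\,\mu(D_r(a_n))^{1/2}\,\bigr\}_n.
\]
Then $\|S^\ast g\|_{\ell^2}^2=\sum_n |g(a_n)|^2\mu(D_r(a_n))$, which by Theorem \ref{tludecomp} is comparable to $\|g\|_{A^2(u)}^2$. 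This is exactly the statement that $S^\ast$ is bounded (the upper estimate) and bounded below (the lower estimate). Boundedness of $S^\ast$ gives boundedness of $S$, and the lower bound $\|S^\ast g\|_{\ell^2}\gtrsim\|g\|_{A^2(u)}$ forces $S$ to have dense range with closed range, hence to be surjective. A clean way to package this: for any $f\in A^2(u)$, the functional $g\mapsto\langle f,g\rangle$ factors through $S^\ast$ because $S^\ast$ is bounded below, producing the desired coefficient sequence $\{c_n\}\in\ell^2$ with $f=S(\{c_n\})$.

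The one point requiring care, and the main obstacle, is verifying that $S$ is genuinely well defined on all of $\ell^2$, i.e. that the series $\sum_n c_n K_u(\cdot,a_n)\mu(D_r(a_n))^{1/2}$ converges in $A^2(u)$ for every $\{c_n\}\in\ell^2$. Since we lack an explicit formula for $K_u$, I would argue abstractly: $S$ is the adjoint of the operator $g\mapsto\{g(a_n)\mu(D_r(a_n))^{1/2}\}$, which Theorem \ref{tludecomp} shows maps $A^2(u)$ boundedly into $\ell^2$; the adjoint of a bounded operator between Hilbert spaces is automatically bounded, and this is precisely what guarantees convergence of the series and well-definedness of $S$. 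Thus the entire argument reduces to the sampling equivalence already granted, and the existence of reproducing kernels recorded in the Remark; no estimate on $K_u$ beyond these is needed. Finally I would note that the closed-range/surjectivity step uses the standard fact that a bounded operator whose adjoint is bounded below is surjective, so the decomposition \eqref{eqdecomp} holds with $\{c_n\}\in\ell^2$ as claimed.
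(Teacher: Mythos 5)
Your proposal is correct and follows essentially the same route as the paper: both arguments introduce the sampling operator $g\mapsto\{g(a_n)\mu(D_r(a_n))^{1/2}\}$, use Theorem \ref{tludecomp} to see it is bounded and bounded below (hence has surjective adjoint), and identify that adjoint with the synthesis series $\sum_n c_n K_u(\cdot,a_n)\mu(D_r(a_n))^{1/2}$. The only cosmetic difference is that the paper verifies convergence of the series by an explicit Cauchy-sequence estimate on the tails, whereas you invoke boundedness of the adjoint directly; these amount to the same thing.
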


\begin{proof}
By theorem \ref{tludecomp}, we know that there exists an $\varepsilon$-separated sequence $\{a_n\}$ such that $\|f\|^2_{A^2(u)}\sim \sum_k |f(a_n)|^2\mu(D_r(a_n))$. Consider the linear operator $R:A^2(u)\to l^2$ defined by: $$Rf:=\{f(a_n)\mu(D_r(a_n))^{1/2}\}.$$ Then, $\|Rf\|_{l^2}^2=\displaystyle\sum_n |f(a_n)|^2\mu(D_r(a_n)) \sim \|f\|_{A^2(u)}^2$ and so $R$ is a bounded injective linear operator having closed range. Consequently, the dual mapping $R^\ast :l^2\to A^2(u)$ is onto. Now, let $\{c_n\} \in l^2$, then for $f\in A^2(u)$ we have

\begin{eqnarray*}
\langle R^\ast \{c_n\} , f \rangle_{A^2(u)} &=& \langle \{c_n\}, Rf\rangle_{l^2}\\
                                            &=& \sum_n c_n\overline{f(a_n)}\mu(D_r(a_n))^{1/2}\\
                                            &=& \sum_n c_n\int_{\D} \overline{f(z)}K_u(z,a_n)d\mu(z) \mu(D_r(a_n))^{1/2}\\
                                            &=& \int_{\D} \overline{f(z)}\sum_n c_n K_u(z,a_n)\mu(D_r(a_n))^{1/2}d\mu(z)\\
                                            &=& \left\langle \sum_n a_n K_u(\cdot,a_n)\mu(D_r(a_n))^{1/2}, f\right\rangle_{A^2(u)}.
\end{eqnarray*}
Here, we justify the interchange of integration and summation as follows: Given a sequence $\{c_n\}\in l^2$ and $\varepsilon>0$, there exists $N\in \mathbb{N}$ such that if $m,l\geq N$ then $$\sum_{n=m}^l|c_n|^2<\displaystyle\frac{\varepsilon}{\|R^\ast\|}.$$ Thus,
\begin{eqnarray*}
&&\sum_{n=m}^l c_n\int_{\D}\overline{f(z)}K_u(z,a_n)d\mu(z)\mu(D_r(a_n))^{1/2}\\
&&\qquad= \int_{\D}\overline{f(z)}\sum_{n=m}^l c_n K_u(z,a_n)\mu(D_r(a_n))^{1/2}d\mu(z)
\end{eqnarray*}
and
\begin{eqnarray*}
&&\left\|\sum_{n=m}^l c_n K_u(z,a_n)\mu(D_r(a_n))^{1/2}\right\|_{A^2(u)}\\ &&\qquad=  \sup_{\|f\|_{A^2(u)}=1}\left|\left\langle \sum_{n=m}^l c_n K_u(z,a_n)\mu(D_r(a_n))^{1/2}, f\right\rangle_{A^2(u)}\right|\\
          &&\qquad=  \sup_{\|f\|_{A^2(u)}=1}\left| \sum_{n=m}^l c_n\int_{\D} K_u(z,a_n)d\mu(z)\mu(D_r(a_n))^{1/2} \right|\\
          &&\qquad= \sup_{\|f\|_{A^2(u)}=1}|\langle R^\ast\{\hat{c}_n\},f\rangle_A^2(u)|\leq \|R^\ast\|\|\{\hat{c}_n\}\|_{l^2}<\varepsilon
\end{eqnarray*}
where $$\hat{c}_n:=\left\{
                     \begin{array}{ll}
                       c_n, & \hbox{if } n\in\{m,\dots,l\} \\
                       0, & \hbox{otherwise}
                     \end{array}
                   \right.
$$

Thus, $s_m:=\sum_{n=1}^m c_n K_u(\cdot,a_n)\mu(D_r(a_n))^{1/2}$ forms a Cauchy sequence and consequently it converges in the $A^2(u)$-norm to \[s:=\sum_{n=1}^\infty c_n K_u(\cdot,a_n)\mu(D_r(a_n))^{1/2}\in A^2(u).\] Therefore, $$\int_{\D}f(z)\overline{s_m(z)}d\mu(z)\to \int_{\D}f(z)\overline{s(z)}d\mu(z).$$

Hence, $R^\ast\{c_n\}=\sum_{n=1}^\infty c_n K_u(\cdot,a_n)\mu(D_r(a_n))^{1/2}$ and since $R^\ast$ is surjective, then the result holds.

\end{proof}

\section{Boundedness of Toplitz operators}

\begin{teo}\label{tboundbere}
Suppose that $u$ satisfies the $B_2$ condition and that the Toeplitz operator $T_\nu :A^2(u)\to A^2(u)$ is bounded. Then the Berezin transform $\tilde{\nu}$ is bounded.
\end{teo}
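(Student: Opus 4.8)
The plan is to reduce everything to the single sesquilinear identity
$$\langle T_\nu f, g\rangle_{A^2(u)} = \int_\D f(w)\overline{g(w)}\,d\nu(w),$$
valid for $f,g$ in a suitable dense class, and then to exploit the trivial operator bound $|\langle T_\nu h, h\rangle_{A^2(u)}|\le \|T_\nu\|\,\|h\|_{A^2(u)}^2$ with the particular choice $h=B_a$. Once the identity is known for $f=g=B_a$ it reads $\int_\D |B_a|^2\,d\nu = \langle T_\nu B_a, B_a\rangle_{A^2(u)}$, and combining it with the operator bound gives $\int_\D |B_a|^2\,d\nu \le \|T_\nu\|\,\|B_a\|_{A^2(u)}^2$; dividing by the normalizing factor in the definition of $\tilde\nu$ then yields $\tilde\nu(a)\le \|T_\nu\|$ for every $a\in\D$, which is the assertion. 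Note that no use is made of the explicit form of $K_u$ beyond its reproducing property, and that $B_a$ need not be a reproducing kernel of $A^2(u)$: it enters merely as a convenient test vector.

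First I would verify the identity for polynomials $p,q$. Writing out $\langle T_\nu p, q\rangle_{A^2(u)}$ and inserting the definition of $T_\nu$, the reproducing property $\int_\D K_u(z,w)\overline{q(z)}u(z)\,dA(z) = \overline{q(w)}$ collapses the inner $z$-integral to $\overline{q(w)}$, leaving $\int_\D p(w)\overline{q(w)}\,d\nu(w)$ after interchanging the order of integration. The interchange is the only delicate point; I would justify it by truncating $\nu$ to the compact sets $\{|w|\le 1-1/n\}$, on which $K_u(w,w)$ is bounded and the double integral is plainly finite, so that Fubini applies, and then let $n\to\infty$ using that $p,q$ are bounded and $\nu$ is finite.

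Next I would pass from polynomials to $B_a$. For fixed $a\in\D$ the function $B_a(z)=(1-\overline{a}z)^{-2}$ is analytic in a neighbourhood of $\overline{\D}$ and hence bounded there, so $B_a\in A^2(u)$ (using $u\in L^1$, with $\mu$ a probability measure) and $B_a\in L^2(\nu)$ (using that $\nu$ is finite), and its Taylor partial sums $p_N$ converge to $B_a$ uniformly on $\overline{\D}$. Uniform convergence forces $p_N\to B_a$ both in $A^2(u)$ and in $L^2(\nu)$; since $T_\nu$ is bounded, $\langle T_\nu p_N, p_N\rangle_{A^2(u)}\to\langle T_\nu B_a, B_a\rangle_{A^2(u)}$, while $\int_\D |p_N|^2\,d\nu\to\int_\D |B_a|^2\,d\nu$. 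Passing to the limit in the polynomial identity therefore yields $\langle T_\nu B_a, B_a\rangle_{A^2(u)}=\int_\D |B_a|^2\,d\nu$, and the conclusion follows as in the first paragraph.

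The main obstacle is the Fubini step underlying the basic identity: the diagonal $K_u(w,w)\sim \mu(D_r(w))^{-1}$ need not be globally $\nu$-integrable, so the naive double integral may diverge and the interchange cannot be taken for granted. The truncation argument above circumvents this, but it is precisely here that care is required, and it is the reason the explicit kernel (which is unavailable) is never needed. I also suspect the normalizing factor in the definition of $\tilde\nu$ should read $\|B_a\|_{A^2(u)}^2$ rather than $\|B_a\|_{A^2(u)}$, since only the squared norm produces a bound on $\tilde\nu(a)$ that is independent of $a$.
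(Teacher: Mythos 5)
Your overall architecture coincides with the paper's: establish the sesquilinear identity $\langle T_\nu f,g\rangle_{A^2(u)}=\langle f,g\rangle_{L^2(\nu)}$ on a dense class, test with $f=g=B_a$, and divide by $\|B_a\|_{A^2(u)}^2$ (you are right that the normalizing factor in the definition of $\tilde\nu$ must be the squared norm; the paper's own proof uses it that way). The difference is the dense class, and it matters. The paper pairs $T_\nu$ against finite sums $\sum_l\lambda_l K_u(\cdot,a_l)$ of reproducing kernels, for which $\langle T_\nu s,K_u(\cdot,a_l)\rangle_{A^2(u)}=(T_\nu s)(a_l)=\int_\D K_u(a_l,w)s(w)\,d\nu(w)$ follows from the reproducing property plus the definition of $T_\nu$ alone --- no interchange of integrals is ever performed --- and then invokes the atomic decomposition (Theorem \ref{Thmatomdecomp}) to pass to general $f,g$. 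You pair against arbitrary polynomials $q$, which forces a genuine Fubini, and this is where your argument has a gap.

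The truncation to $\nu_n:=\nu|_{\{|w|\le 1-1/n\}}$ does legitimize Fubini and yields $\langle T_{\nu_n}p,q\rangle_{A^2(u)}=\int_\D p\overline{q}\,d\nu_n$. But your justification for letting $n\to\infty$ (``$p,q$ are bounded and $\nu$ is finite'') only controls the right-hand side. On the left you must show $\langle T_{\nu_n}p,q\rangle_{A^2(u)}\to\langle T_\nu p,q\rangle_{A^2(u)}$, i.e.\ that $\int_\D\bigl(\int_{\{|w|>1-1/n\}}K_u(z,w)p(w)\,d\nu(w)\bigr)\overline{q(z)}u(z)\,dA(z)\to 0$, and no domination is supplied. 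The natural majorant $G(z):=\int_\D|K_u(z,w)||p(w)|\,d\nu(w)$ is not known to lie in $L^2(u)$, nor even in $L^1(|q|u\,dA)$: boundedness of $T_\nu$ controls the analytic function $T_\nu p$, not its ``modulus'' version $G$, and the obvious estimate $\int_\D|K_u(z,w)||q(z)|u(z)\,dA(z)\le\|q\|_{A^2(u)}K_u(w,w)^{1/2}$ would require $K_u(w,w)^{1/2}\in L^1(\nu)$, which is exactly the kind of information one does not have a priori. The cheapest repair is to take $q=K_u(\cdot,a)$ (or a finite combination of kernels): then $\langle T_{\nu_n}p,K_u(\cdot,a)\rangle_{A^2(u)}=(T_{\nu_n}p)(a)\to(T_\nu p)(a)$ by dominated convergence in a single $w$-integral, and density of the span of the kernels finishes the job --- but at that point you have essentially reproduced the paper's argument. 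Your final step (Taylor partial sums of $B_a$ converge uniformly on $\overline{\D}$, hence in $A^2(u)$ and in $L^2(\nu)$, so the identity passes to $B_a$) is fine as stated.
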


\begin{proof}
First, notice that for every finite sum $s_m:=\displaystyle\sum_{n=1}^m\lambda_n K_u(\cdot,a_n)$, $a_n\in\D$, the following holds:
\begin{eqnarray*}
&&\left\langle T_\nu\left(\sum_{n=1}^m\lambda_n K_u(\cdot,a_n)\right),\sum_{n=1}^m\lambda_n K_u(\cdot,a_n)\right\rangle_{A^2(u)} \\&&= \sum_{l=1}^m T_{\nu}\left(\sum_{n=1}^m\lambda_n K_u(\cdot,a_n)\right)(a_l)\\
&&= \sum_{l=1}^m \lambda_l \int_{\D} K_u(a_l,z)\sum_{n=1}^m\lambda_n K_u(z,a_n)d\nu(z)\\ &&= \left\|\sum_{n=1}^m\lambda_n K_u(\cdot,a_n)\right\|_{L^2(\nu)}^2\\ &&\lesssim \left\|\sum_{n=1}^m\lambda_n K_u(\cdot,a_n)\right\|_{A^2(u)}^2.
\end{eqnarray*}
So, if $\{s_m\}$ is a Cauchy sequence in $A^2(u)$, it is also a Cauchy sequence in $L^2(\nu)$ and so it converges in $L^2(\nu)$. Therefore, by theorem \ref{Thmatomdecomp} we have that if $s_m\to g$ in the $A^2(u)$-norm, then $s_m\to g$ in $L^2(\nu)$ and consequently for any function $f\in A^2(u)$, $\langle f,s_m\rangle_{L^2(\nu)}\to\langle f, g\rangle_{L^2(\nu)}$. Moreover,
\begin{eqnarray*}
\langle T_\nu f, g\rangle_{A^2(u)}&=&\lim_{m\to\infty}\langle T_\nu f, s_m\rangle_{A^2(u)}\\&=&\langle f, s_m\rangle_{L^2(\nu)}.
\end{eqnarray*}
Therefore,
\begin{equation}\label{toepeq1}
\langle T_\nu f,g\rangle_{A^2(u)}=\langle f, g\rangle_{L^2(\nu)}.
\end{equation}
Hence for any $a\in \D$,
\begin{eqnarray*}
\langle T_\nu B_a, B_a\rangle_{A^2(u)} &=& \langle B_a, B_a\rangle_{L^2(\nu)}\\
&=&\tilde{\nu}(a)\|B_a\|_{A^2(u)}^2
\end{eqnarray*}
and consequently, $\tilde{\nu}(a)\leq \|T_\nu\|$.

\end{proof}

We will need the following lemma due to Constantin:
\begin{lem}\label{lemsim}\cite{C2}
Suppose $u$ satisfies condition $B_2$, then $$\|B_a\|_{A^2(u)}^2\sim \frac{\mu(D_r(a))}{(1-|a|)^4}$$
\end{lem}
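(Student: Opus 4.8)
The plan is to prove the two inequalities separately, writing $d := 1-|a|$ and using that $\|B_a\|_{A^2(u)}^2 = \int_\D |1-\overline{a}z|^{-4}u(z)\,dA(z)$.

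First I would dispose of the lower bound, which is routine. On $D_r(a)$ one has $|1-\overline{a}z|\sim d$ (a standard pseudohyperbolic estimate), so restricting the integral to $D_r(a)$ gives
\[
\|B_a\|_{A^2(u)}^2 \ge \int_{D_r(a)}\frac{u(z)}{|1-\overline{a}z|^4}\,dA(z) \gtrsim \frac{1}{d^4}\int_{D_r(a)}u\,dA = \frac{\mu(D_r(a))}{d^4}.
\]

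For the upper bound I would decompose $\D$ into the dyadic annuli $R_k := \{z : 2^k d \le |1-\overline{a}z| < 2^{k+1}d\}$, $k\ge 0$, on which $|1-\overline{a}z|^{-4}\sim (2^k d)^{-4}$; the innermost region $\{|1-\overline{a}z|<d\}$ is comparable to $D_r(a)$ and contributes the correct order directly. This reduces the problem to showing $\sum_{k\ge 0}2^{-4k}\mu(R_k) \lesssim \mu(D_r(a))$. Since each $R_k$ lies in a Carleson square $S(I_k)$ with $|I_k|\sim 2^k d$ and $A(S(I_k))\sim (2^k d)^2$, while $D_r(a)\subset S(I_0)$ with $|I_0|\sim d$, the whole matter comes down to controlling the growth of $\mu(S(I_k))$ relative to $\mu(S(I_0))$, together with the comparison $\mu(S(I_0))\sim\mu(D_r(a))$.

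The hard part will be the growth estimate
\[
\mu(S(I_k)) \lesssim 2^{(4-\delta)k}\,\mu(S(I_0)) \qquad \text{for some } \delta>0 .
\]
If one only applies Cauchy--Schwarz to $A(S(I_0))=\int_{S(I_0)}u^{1/2}u^{-1/2}\,dA$ and then invokes the $B_2$ condition on the larger square $S(I_k)\supset S(I_0)$, the exponent that appears is exactly $4$, and the resulting series $\sum_k 2^{-4k}\,2^{4k}$ diverges; this borderline failure is the obstacle I expect to spend the most effort on. I would overcome it using the self-improvement of Bekoll\'e--Bonami weights, namely that a $B_2$ weight automatically satisfies $B_{2-\epsilon}$ for some $\epsilon>0$ (a reverse H\"older inequality). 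Replacing Cauchy--Schwarz by H\"older's inequality at the exponent $p=2-\epsilon$ turns the exponent $2\cdot 2$ into $2p=4-2\epsilon$, which yields the displayed bound with $\delta=2\epsilon$.

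To close the argument I would record the concentration estimate $\mu(S(I_0))\sim\mu(D_r(a))$: one inequality is immediate from $D_r(a)\subset S(I_0)$, and the reverse follows from the doubling ($A_\infty$) property of $B_2$ weights, by splitting $S(I_0)$ into its top region (comparable to $D_r(a)$) and its boundary children and summing a convergent geometric series. Combining the pieces,
\[
\frac{1}{d^4}\sum_{k\ge 0}2^{-4k}\mu(S(I_k)) \lesssim \frac{\mu(S(I_0))}{d^4}\sum_{k\ge 0}2^{-2\epsilon k} \lesssim \frac{\mu(D_r(a))}{d^4},
\]
which gives the upper bound and completes the proof.
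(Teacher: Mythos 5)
The paper does not actually prove this lemma---it is quoted from Constantin \cite{C2}---so your argument has to stand on its own, and it cannot as written. Your skeleton is the standard one: the lower bound via $|1-\overline{a}z|\sim 1-|a|$ on $D_r(a)$ is fine, the dyadic shells $R_k$ are the right decomposition, and you have correctly located the crux, namely that Cauchy--Schwarz plus $B_2$ on $S(I_k)$ gives only $\mu(S(I_k))\lesssim 2^{4k}\mu(S(I_0))$, which is exactly borderline. The gap is in how you propose to beat the borderline: you invoke ``self-improvement of Bekoll\'e--Bonami weights,'' i.e.\ $B_2\Rightarrow B_{2-\epsilon}$ via a reverse H\"older inequality. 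This is not an available tool. Unlike Muckenhoupt weights, Bekoll\'e--Bonami weights are tested only on Carleson squares anchored at the boundary; they need not be doubling and are known \emph{not} to satisfy a reverse H\"older inequality in general, and the openness of the class in $p$ is not a standard fact one can cite. Since this is the one step carrying all the difficulty, the proof as proposed does not go through. (A secondary instance of the same error: you justify $\mu(S(I_0))\sim\mu(D_r(a))$ by ``the doubling ($A_\infty$) property of $B_2$ weights''; that property is likewise unavailable, though this step has an easy direct proof: writing $\sigma=u^{-1}dA$, Cauchy--Schwarz on $D_r(a)$ gives $\mu(D_r(a))\sigma(D_r(a))\gtrsim (1-|a|)^4$, while $B_2$ on $S(I_0)\supset D_r(a)$ gives $\mu(S(I_0))\sigma(D_r(a))\le\mu(S(I_0))\sigma(S(I_0))\lesssim (1-|a|)^4$, and dividing yields $\mu(S(I_0))\lesssim\mu(D_r(a))$.)

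The good news is that the geometric gain you need can be extracted from $B_2$ alone, without any self-improvement, by tracking the dual mass $\sigma(S(I_k))=\int_{S(I_k)}u^{-1}dA$ instead of $\mu$. The shell $S(I_{k+1})\setminus S(I_k)$ contains a Carleson square $Q_k$ with $A(Q_k)\sim (2^kd)^2$; Cauchy--Schwarz on $Q_k$ gives $\sigma(Q_k)\ge A(Q_k)^2/\mu(Q_k)\gtrsim (2^kd)^4/\mu(S(I_{k+1}))$, and the $B_2$ condition on $S(I_{k+1})$ gives $\mu(S(I_{k+1}))\lesssim (2^kd)^4/\sigma(S(I_k))$, whence $\sigma(Q_k)\gtrsim\sigma(S(I_k))$ and therefore $\sigma(S(I_{k+1}))\ge(1+\delta)\,\sigma(S(I_k))$ for some $\delta>0$ depending only on the $B_2$ constant. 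Iterating, $\sigma(S(I_k))\ge(1+\delta)^k\sigma(S(I_0))$, and then $B_2$ on $S(I_k)$ yields
\[
\mu(S(I_k))\lesssim\frac{(2^kd)^4}{\sigma(S(I_k))}\lesssim\frac{16^k}{(1+\delta)^k}\cdot\frac{d^4}{\sigma(S(I_0))}\lesssim\Bigl(\tfrac{16}{1+\delta}\Bigr)^k\mu(S(I_0)),
\]
which is your growth estimate with $2^{4-\delta'}=16/(1+\delta)$. With this substitution your summation $\sum_k 2^{-4k}\mu(R_k)\lesssim\mu(S(I_0))\sim\mu(D_r(a))$ converges and the proof closes. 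So the architecture is right, but the engine you put in it---reverse H\"older for $B_2$ weights---is precisely the point where intuition imported from the Muckenhoupt theory fails.
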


\begin{pro}\label{p3}
If $\tilde{\nu}$ is bounded on $\D$ then $\nu$ is a $\mu$-Carleson measure, where $\mu=udA$.
\end{pro}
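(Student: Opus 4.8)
The plan is to verify the geometric Carleson condition (2) of Theorem \ref{t2}, namely to produce $r>0$ and $\gamma>0$ with $\nu(D_r(a))\le\gamma\,\mu(D_r(a))$ for all $a\in\D$; once this is in hand, Theorem \ref{t2} immediately yields that $\nu$ is a $\mu$-Carleson measure. The starting point is the identity already extracted in the proof of Theorem \ref{tboundbere}, which expresses the integral of $|B_a|^2$ against $\nu$ through the Berezin transform: $\int_\D |B_a(z)|^2\,d\nu(z)=\tilde\nu(a)\,\|B_a\|_{A^2(u)}^2$. Since by hypothesis $\tilde\nu$ is bounded, say $\tilde\nu\le M$ on $\D$, this gives the one-sided bound $\int_\D|B_a|^2\,d\nu\le M\,\|B_a\|_{A^2(u)}^2$ for every $a$.

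Next I would bound the left-hand integral from below by discarding everything outside the pseudohyperbolic disc $D_r(a)$ and using the size of the Bergman kernel there. The key elementary fact is that for $z\in D_r(a)$ one has $|1-\overline{a}z|\asymp 1-|a|^2\asymp 1-|a|$, with comparability constants depending only on $r$; this follows from the identity $1-\rho(z,a)^2=\frac{(1-|a|^2)(1-|z|^2)}{|1-\overline{a}z|^2}$ together with the comparability $1-|z|^2\asymp 1-|a|^2$ valid on $D_r(a)$. Consequently $|B_a(z)|^2=|1-\overline{a}z|^{-4}\asymp (1-|a|)^{-4}$ uniformly for $z\in D_r(a)$, whence $\int_\D|B_a|^2\,d\nu\ge\int_{D_r(a)}|B_a|^2\,d\nu\gtrsim (1-|a|)^{-4}\,\nu(D_r(a))$.

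Finally I would estimate the right-hand side from above using Constantin's Lemma \ref{lemsim}, which gives $\|B_a\|_{A^2(u)}^2\asymp\mu(D_r(a))(1-|a|)^{-4}$. Chaining the two estimates yields $(1-|a|)^{-4}\nu(D_r(a))\lesssim M\,\mu(D_r(a))(1-|a|)^{-4}$, and the common factor $(1-|a|)^{-4}$ cancels to leave $\nu(D_r(a))\lesssim\mu(D_r(a))$, uniformly in $a$. This is exactly condition (2) of Theorem \ref{t2}, so $\nu$ is $\mu$-Carleson.

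The only genuinely delicate point is the pointwise kernel estimate on the pseudohyperbolic disc, and in particular making sure a single radius $r$ can be used throughout: the same $r$ must serve in Lemma \ref{lemsim}, in the lower bound for the integral, and in the eventual appeal to Theorem \ref{t2}. Since all of these statements hold for every fixed $r>0$ (with constants depending on $r$), fixing any convenient $r$ at the outset removes the difficulty. One should also note the normalization of $\tilde\nu$: I am using the relation $\int_\D|B_a|^2\,d\nu=\tilde\nu(a)\|B_a\|_{A^2(u)}^2$ exactly as it appears in the computation concluding the proof of Theorem \ref{tboundbere}.
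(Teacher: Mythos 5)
Your proposal is correct and follows essentially the same route as the paper: restrict $\int_\D|B_a|^2\,d\nu$ to the pseudohyperbolic disc $D_r(a)$, use $|1-\overline{a}z|\asymp 1-|a|$ there together with Lemma \ref{lemsim} to get $\nu(D_r(a))\lesssim\mu(D_r(a))$, and conclude via Theorem \ref{t2}. The only difference is that you spell out the kernel comparability and the uniform choice of $r$ more explicitly than the paper does.
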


\begin{proof}
Fix $0<r<1$, if $\tilde{\nu}$ is bounded then there exists a constant $C>0$ such that for every $a\in\D$ $$\frac{1}{\|B_a\|_{A^2(u)}^2}\int_{D_r(a)}\frac{1}{|1-\overline{a}z|^4}d\nu(z) \leq C,$$ and consequently
\begin{eqnarray*}
\nu(D_r(a))&\lesssim& C(1-|a|^2)^4\|B_a\|_{A^2(u)}^2\\
           &\sim& \frac{(1-|a|^2)^4\mu(D_r(a))}{(1-|a|)^4}\\
           &\lesssim& \mu(D_r(a))
\end{eqnarray*}
and by theorem \ref{t2} we obtain the result.
\end{proof}

\begin{pro}\label{p4}
Suppose $\nu$ is a $\mu$-Carleson measure, then the Toeplitz operator $T_\nu:A^2(u)\to A^2(u)$ is bounded.
\end{pro}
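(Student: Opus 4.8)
The plan is to prove boundedness by duality, constructing the operator through the Riesz representation theorem and then identifying it with $T_\nu$ pointwise. Since $T_\nu$ is defined on polynomials and these form a dense subspace of $A^2(u)$, it suffices to produce a uniform estimate $\|T_\nu f\|_{A^2(u)}\lesssim\|f\|_{A^2(u)}$ for polynomials $f$ and then extend by continuity.

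First I would upgrade the $\mu$-Carleson hypothesis, which by Theorem \ref{t2} is phrased for polynomials, to the inclusion $A^2(u)\subset L^2(\nu)$ with $\|f\|_{L^2(\nu)}\lesssim\|f\|_{A^2(u)}$ for every $f\in A^2(u)$. Given $f\in A^2(u)$ and polynomials $p_n\to f$ in $A^2(u)$, the sequence $\{p_n\}$ is Cauchy in $L^2(\nu)$ by the Carleson bound, hence converges there; comparing with pointwise convergence (guaranteed by the boundedness of the evaluation functionals on $A^2(u)$) identifies the $L^2(\nu)$-limit with $f$ $\nu$-a.e., and Fatou's lemma delivers the norm bound. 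With this in hand, the sesquilinear form $B(f,g):=\langle f,g\rangle_{L^2(\nu)}$ is well defined on $A^2(u)\times A^2(u)$ and, by the Cauchy--Schwarz inequality together with the Carleson estimate, satisfies $|B(f,g)|\leq\|f\|_{L^2(\nu)}\|g\|_{L^2(\nu)}\lesssim\|f\|_{A^2(u)}\|g\|_{A^2(u)}$, so it is bounded.

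By the Riesz representation theorem there is a bounded linear operator $S:A^2(u)\to A^2(u)$ with $\langle Sf,g\rangle_{A^2(u)}=B(f,g)=\langle f,g\rangle_{L^2(\nu)}$ and $\|S\|\lesssim 1$. It remains to check that $S$ agrees with the integral operator $T_\nu$ on polynomials. Evaluating $Sf$ at a point $a\in\D$ via the reproducing property of $K_u$ and using $\overline{K_u(w,a)}=K_u(a,w)$ gives
\begin{equation*}
Sf(a)=\langle Sf,K_u(\cdot,a)\rangle_{A^2(u)}=\langle f,K_u(\cdot,a)\rangle_{L^2(\nu)}=\int_\D f(w)K_u(a,w)\,d\nu(w)=T_\nu f(a),
\end{equation*}
so $T_\nu f=Sf\in A^2(u)$ and $\|T_\nu f\|_{A^2(u)}=\|Sf\|_{A^2(u)}\lesssim\|f\|_{A^2(u)}$ for every polynomial $f$. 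Extending $T_\nu$ by density then yields a bounded operator on $A^2(u)$, as claimed.

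The main obstacle I anticipate is the bookkeeping required to pass from the polynomial-level statements to the whole space: making the Carleson inequality valid on all of $A^2(u)$ (equivalently, justifying that the $A^2(u)$-limit and the $L^2(\nu)$-limit of an approximating sequence of polynomials coincide $\nu$-a.e.), and, if one prefers to argue directly from the integral defining $T_\nu f$ rather than through Riesz representation, justifying the Fubini interchange that produces the identity $\langle T_\nu f,g\rangle_{A^2(u)}=\langle f,g\rangle_{L^2(\nu)}$. The Riesz-representation route has the advantage of sidestepping the a priori question of whether $T_\nu f$ belongs to $A^2(u)$, since both membership and the norm bound emerge simultaneously from the identification $T_\nu f=Sf$.
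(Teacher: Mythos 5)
Your argument is essentially the paper's: the paper fixes $g$ and Riesz-represents the bounded functional $f\mapsto\langle f,g\rangle_{L^2(\nu)}$ by some $\tilde g\in A^2(u)$, then identifies $\tilde g=T_\nu g$ by testing against $K_u(\cdot,a)$, exactly as you do with the sesquilinear form and the operator $S$. Your version is correct, and your explicit extension of the Carleson inequality from polynomials to all of $A^2(u)$ fills in a step the paper leaves implicit.
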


\begin{proof}
For every function $g\in A_2(u)$ define  the linear operator $g^\ast:A^2(u)\to \mathbb{C}$ by: $g^\ast(f):=\langle f, g \rangle_{L^2(\nu)}.$ Note that if $\nu$ is a $\mu$-Carleson measure, then $$ |g^\ast (f)| \leq \|f\|_{L^2(\nu)}\|g\|_{L^2(\nu)}\leq \|f\|_{A^2(u)}\|g\|_{L^2(\nu)}$$ so $g^\ast \in (A^2(u))^\ast$, $\|g^\ast\|\leq\|g\|_{L^2(\nu)}$ and there exists $\tilde{g}\in A^2(u)$ such that $g^\ast(f)=\langle f, \tilde{g}\rangle_{A^2(u)}$.

We have just proved that for every $g\in A^2(u)$ there exists $\tilde{g}\in A^2(u)$ such that $$\langle f, g\rangle_{L^2(\nu)}=\langle f, \tilde{g}\rangle_{A^2(u)} \qquad \forall f\in A^2(u).$$ In particular, taking $f=K_u(\cdot, a)$ we have that for every $a\in \D$, $$ \tilde{g}(a)=\int g(z)K_u(a,z) d\nu(z)=T_\nu g(a).$$ Hence, $T_\nu(A^2(u)\subset A^2(u)$. Moreover, since $\|\tilde{g}\|_{A^2(u)}=\|g^\ast\|\leq \|g\|_{L^2(\nu)}\leq\|g\|_{A^2(u)}$, then $$\|T_\nu g\|_{A^2(u)}\leq \|g\|_{A^2(u)}$$ and consequently $T_\nu$ is bounded.
\end{proof}

\begin{cor}\label{cinner}
For $T_\nu:A^2(u)\to A^2(u)$ bounded, the following equality holds for every $f$ and $g$ in $A^2(u)$:
\begin{equation}\label{adjtoep}
\langle T_\nu g , f \rangle_{A^2(u)}=\langle g, f \rangle_{L^2(\nu)}
\end{equation}
\end{cor}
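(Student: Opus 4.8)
The plan is to observe that the identity asked for is, after interchanging the names of the two functions, exactly equation~(\ref{toepeq1}) established inside the proof of Theorem~\ref{tboundbere}. That equation reads $\langle T_\nu f, g\rangle_{A^2(u)}=\langle f,g\rangle_{L^2(\nu)}$ for all $f,g\in A^2(u)$, and it was derived there under the single hypothesis that $T_\nu$ is bounded, which is precisely the standing hypothesis of this corollary. Relabeling $f\leftrightarrow g$ turns it into (\ref{adjtoep}). So conceptually there is nothing new to prove: the corollary merely records (\ref{toepeq1}) in symmetric form. If one prefers to route through the earlier results instead, one can use Theorem~\ref{tboundbere} together with Proposition~\ref{p3} to conclude that $\nu$ is $\mu$-Carleson, obtain the identity $\langle f,g\rangle_{L^2(\nu)}=\langle f,T_\nu g\rangle_{A^2(u)}$ from the proof of Proposition~\ref{p4}, and take complex conjugates.

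For a self-contained argument I would reconstruct the identity directly, first verifying it on the dense set of finite linear combinations of reproducing kernels. For $s=\sum_n\lambda_n K_u(\cdot,a_n)$ and arbitrary $g\in A^2(u)$, the reproducing property gives $\langle T_\nu g, s\rangle_{A^2(u)}=\sum_n\overline{\lambda_n}\,(T_\nu g)(a_n)=\sum_n\overline{\lambda_n}\int_\D K_u(a_n,z)g(z)\,d\nu(z)$. Using the conjugate symmetry $K_u(a_n,z)=\overline{K_u(z,a_n)}$, this equals $\int_\D g(z)\overline{s(z)}\,d\nu(z)=\langle g,s\rangle_{L^2(\nu)}$, which is (\ref{adjtoep}) with $f=s$.

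To remove the restriction $f=s$ I would pass to the limit. Boundedness of $T_\nu$ yields, via the computation at the start of the proof of Theorem~\ref{tboundbere}, the bound $\|s\|_{L^2(\nu)}^2=\langle T_\nu s,s\rangle_{A^2(u)}\lesssim\|s\|_{A^2(u)}^2$ on the dense set, so the inclusion $A^2(u)\hookrightarrow L^2(\nu)$ is continuous. Consequently any $f\in A^2(u)$ is an $A^2(u)$-limit, and hence also an $L^2(\nu)$-limit, of a sequence $s_m$ supplied by the atomic decomposition of Theorem~\ref{Thmatomdecomp}. Letting $m\to\infty$ in $\langle T_\nu g,s_m\rangle_{A^2(u)}=\langle g,s_m\rangle_{L^2(\nu)}$, the left side converges by continuity of the $A^2(u)$ inner product and the right side by $L^2(\nu)$-convergence of $s_m$, giving (\ref{adjtoep}) for all $f$. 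The only step that genuinely uses the hypothesis, and the only one requiring care, is this limit passage, namely ensuring that $A^2(u)$-convergence forces $L^2(\nu)$-convergence; since that continuity of the embedding was already extracted from the boundedness of $T_\nu$ earlier, the corollary is essentially immediate.
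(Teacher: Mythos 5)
Your proposal is correct and matches the paper, which states this corollary without a separate proof as an immediate consequence of equation~(\ref{toepeq1}) from the proof of Theorem~\ref{tboundbere} (equivalently, the conjugate of the identity $\langle f,g\rangle_{L^2(\nu)}=\langle f,T_\nu g\rangle_{A^2(u)}$ obtained in the proof of Proposition~\ref{p4}). Your self-contained reconstruction---verifying the identity on finite sums of reproducing kernels and passing to the limit using the continuity of the embedding $A^2(u)\hookrightarrow L^2(\nu)$ extracted from the boundedness of $T_\nu$---is precisely the argument the paper itself uses to establish~(\ref{toepeq1}).
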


\section{Compactness of Toeplitz Operators}

In this section we will characterize compactness of Toeplitz operators in terms of its Berezin transform. We will use the characterization for vanishing Carleson measures given by Constantin.

\begin{defi}
A positive Borel measure $\nu$ on $\D$ is a $\mu$-vanishing Carleson measure if the inclusion operator
$i:A^2(u)\to L^2(\nu)$ is compact.
\end{defi}

\begin{teo}[\cite{C2}, Thm. 3.3]\label{tcompcarchar}
Suppose, $u$ satisfies the $B_2$ condition. Then $\nu$ is a $\mu$-vanishing Carleson measure if and only if for every $0<r<1$, $$\lim_{|a|\to
1^-}\frac{\nu(D_r(a))}{\mu(D_r(a))}=0$$
\end{teo}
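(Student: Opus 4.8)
The plan is to establish both implications of the equivalence, working directly with the inclusion $i:A^2(u)\to L^2(\nu)$ (recall this is what $\mu$-vanishing Carleson means).

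For the implication that compactness of $i$ forces the ratio to vanish, I would test against the normalized reproducing kernels $k_a:=K_u(\cdot,a)/\|K_u(\cdot,a)\|_{A^2(u)}$. First I would record the estimate $\|K_u(\cdot,a)\|_{A^2(u)}^2=K_u(a,a)\sim \mu(D_r(a))^{-1}$, which follows from the pointwise bound in the Remark applied to $f=K_u(\cdot,a)$ together with a matching lower bound. Since $\langle k_a,g\rangle_{A^2(u)}=\overline{g(a)}/\|K_u(\cdot,a)\|_{A^2(u)}$ and $\mu(D_r(a))\to 0$ as $|a|\to 1^-$, testing on the dense set of polynomials shows $k_a\rightharpoonup 0$ weakly while $\|k_a\|_{A^2(u)}=1$. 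Compactness of $i$ then yields $\|k_a\|_{L^2(\nu)}\to 0$. Restricting the integral to $D_{r_0}(a)$ and using a lower bound $|K_u(z,a)|\gtrsim K_u(a,a)$ for $z$ in a small pseudohyperbolic disk about $a$, I would obtain $\|k_a\|_{L^2(\nu)}^2\gtrsim K_u(a,a)\,\nu(D_{r_0}(a))\sim \nu(D_{r_0}(a))/\mu(D_{r_0}(a))$, giving the conclusion for small radii; a covering argument upgrades it to arbitrary $0<r<1$.

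For the converse, suppose the ratio tends to $0$ for every $r$. I would split $\nu=\nu_\rho+\nu^\rho$, where $\nu^\rho$ is the restriction of $\nu$ to $\{|z|\le\rho\}$ and $\nu_\rho$ to the annulus $\{\rho<|z|<1\}$. The inclusion $i^\rho:A^2(u)\to L^2(\nu^\rho)$ is compact: a weakly null sequence in $A^2(u)$ converges to $0$ uniformly on the compact set $\{|z|\le\rho\}$ (by the uniform pointwise bound of the Remark and a normal-families argument), and $\nu^\rho$ is finite, so the $L^2(\nu^\rho)$-norms vanish. For the tail, I would invoke the quantitative form of the Carleson embedding underlying Theorem \ref{t2}: running the lattice argument over an $\varepsilon$-separated net $\{a_n\}$ yields $\|i_{\nu_\rho}\|^2\lesssim \sup_n \nu_\rho(D_r(a_n))/\mu(D_r(a_n))$, and since every disk meeting $\{|z|>\rho\}$ has centre near $\partial\D$, this supremum tends to $0$ as $\rho\to1^-$ by hypothesis. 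Thus $\|i-i^\rho\|=\|i_{\nu_\rho}\|\to0$, so $i$ is an operator-norm limit of compact operators and is therefore compact.

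The main obstacle is the kernel lower bound $|K_u(z,a)|\gtrsim K_u(a,a)$ on a pseudohyperbolic disk used in the first implication: because $A^2(u)$ has no explicit reproducing kernel, this cannot be read off a formula and must be derived from continuity estimates for $a\mapsto K_u(\cdot,a)$ in the $A^2(u)$-norm (equivalently, from the near-constancy of the normalized kernel over $D_{r_0}(a)$). A workable alternative is to replace $k_a$ by the normalized Bergman kernels $b_a:=B_a/\|B_a\|_{A^2(u)}$, for which the explicit formula together with Lemma \ref{lemsim} give $\|b_a\|_{L^2(\nu)}^2\gtrsim \nu(D_r(a))/\mu(D_r(a))$ immediately via $|B_a(z)|\sim(1-|a|)^{-2}$ on $D_r(a)$; the price is that one must then verify $b_a\rightharpoonup0$ weakly in $A^2(u)$, which I would approach by writing $\langle b_a,g\rangle_{A^2(u)}=\overline{P(gu)(a)}/\|B_a\|_{A^2(u)}$ and using boundedness of the Bergman projection $P$ on $L^2(u^{-1})$ (valid since the $B_2$ condition is symmetric in $u\leftrightarrow u^{-1}$) with the $A^2(u^{-1})$ pointwise bound. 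Either route reduces the theorem to the boundary behaviour of the kernels plus the quantitative Carleson estimate, which is exactly where the care is needed.
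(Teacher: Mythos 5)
The paper does not actually prove this statement: it is imported verbatim from Constantin \cite{C2} (Thm.\ 3.3), so there is no internal proof to compare against. Judged on its own terms, your reconstruction is essentially correct and follows the standard two-sided argument. The converse direction (ratio $\to 0$ implies compactness of $i$) is sound: the splitting $\nu=\nu_\rho+\nu^\rho$, compactness of the inclusion into $L^2$ of the finite measure $\nu^\rho$ supported on a compact set via locally uniform convergence of weakly null sequences, and the quantitative operator-norm bound $\|i_{\nu_\rho}\|^2\lesssim\sup_n\nu_\rho(D_r(a_n))/\mu(D_r(a_n))$ extracted from the lattice proof of Theorem \ref{t2} are exactly the right ingredients; the only point to make explicit is that for fixed $r$ a disk $D_r(a_n)$ meeting $\{|z|>\rho\}$ forces $1-|a_n|\lesssim_r 1-\rho$, which you do note. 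In the forward direction, your primary route through the normalized kernels $k_a$ of $A^2(u)$ hinges on the lower bound $|K_u(z,a)|\gtrsim K_u(a,a)$ on a small pseudohyperbolic disk; you correctly flag this as the weak point, and indeed nothing in the paper supplies it --- it requires a genuine off-diagonal estimate for the weighted kernel. Your fallback through the normalized Bergman kernels $b_a=B_a/\|B_a\|_{A^2(u)}$ is the right repair and is in fact the device the surrounding paper itself uses: Lemma \ref{lemsim} together with $|B_a(z)|\sim(1-|a|)^{-2}$ on $D_r(a)$ gives $\|b_a\|_{L^2(\nu)}^2\gtrsim_r\nu(D_r(a))/\mu(D_r(a))$ for every fixed $r$ (no covering argument needed), and the weak nullity of $b_a$ follows more simply than your projection argument suggests: $\|b_a\|_{A^2(u)}=1$ while $b_a\to 0$ uniformly on compacta (since $(1-|a|)^2\mu(D_r(a))^{-1/2}\lesssim(\int_{D_r(a)}u^{-1}dA)^{1/2}\to 0$ by Cauchy--Schwarz and integrability of $u^{-1}$, which $B_2$ guarantees), and the reproducing kernels span a dense subspace. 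With that substitution the argument is complete and self-contained relative to the paper's toolkit.
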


\begin{teo}
Suppose $\nu$ is a positive Borel measure on $\D$. Then the following are equivalent:
\begin{itemize}
\item[(a)] $T_\nu$ is compact.
\item[(b)] $\tilde{\nu}(a)\to 0$ as $|a|\to 1^-$.
\item[(c)] $\nu$ is a $\mu$-vanishing Carleson.
\end{itemize}
\end{teo}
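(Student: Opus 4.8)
The plan is to establish the three-way equivalence through the cycle (a)$\Leftrightarrow$(c), (a)$\Rightarrow$(b), (b)$\Rightarrow$(c), which has the advantage that the genuinely hard ``upper'' estimate bounding $\tilde\nu$ by the Carleson ratio is never needed directly. The backbone is the factorization already implicit in Corollary \ref{cinner}: writing $i:A^2(u)\to L^2(\nu)$ for the inclusion, the identity $\langle T_\nu g,f\rangle_{A^2(u)}=\langle g,f\rangle_{L^2(\nu)}=\langle ig,if\rangle_{L^2(\nu)}$ shows $T_\nu=i^\ast i$. From this, (a)$\Leftrightarrow$(c) is soft operator theory: if $\nu$ is $\mu$-vanishing Carleson then $i$ is compact by definition, so $T_\nu=i^\ast i$ is compact; conversely, if $T_\nu=i^\ast i$ is compact then the positive operator $|i|=(i^\ast i)^{1/2}$ is compact (its eigenvalues are the square roots of those of $i^\ast i$, hence tend to $0$), and the polar decomposition $i=U|i|$ with $U$ a partial isometry realizes $i$ as a bounded operator times a compact one, so $i$ is compact and $\nu$ is $\mu$-vanishing Carleson.

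Next I would prove (a)$\Rightarrow$(b). Setting $\hat B_a:=B_a/\|B_a\|_{A^2(u)}$, Corollary \ref{cinner} gives $\tilde\nu(a)=\langle \hat B_a,\hat B_a\rangle_{L^2(\nu)}=\langle T_\nu\hat B_a,\hat B_a\rangle_{A^2(u)}$. The crux is that $\hat B_a\to0$ weakly in $A^2(u)$ as $|a|\to1^-$: each $\hat B_a$ has norm one, and for any fixed $w$ one has $\langle \hat B_a,K_u(\cdot,w)\rangle_{A^2(u)}=\hat B_a(w)=B_a(w)/\|B_a\|_{A^2(u)}$, where $|B_a(w)|\le(1-|w|)^{-2}$ stays bounded while $\|B_a\|_{A^2(u)}\to\infty$ by Lemma \ref{lemsim}; since $\{K_u(\cdot,w):w\in\D\}$ is dense, this forces $\hat B_a\to0$ weakly. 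Compactness of $T_\nu$ then gives $\|T_\nu\hat B_a\|_{A^2(u)}\to0$, whence $\tilde\nu(a)=|\langle T_\nu\hat B_a,\hat B_a\rangle_{A^2(u)}|\le\|T_\nu\hat B_a\|_{A^2(u)}\to0$.

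Finally I would prove (b)$\Rightarrow$(c) by repeating the estimate of Proposition \ref{p3}, now tracking a limit rather than a uniform bound. Restricting the integral defining $\tilde\nu(a)$ to the pseudohyperbolic disc $D_r(a)$ and using $|1-\overline a z|\lesssim(1-|a|^2)$ there gives $|B_a(z)|^2\gtrsim(1-|a|^2)^{-4}$ on $D_r(a)$, so that after inserting $\|B_a\|_{A^2(u)}^2\sim\mu(D_r(a))(1-|a|)^{-4}$ from Lemma \ref{lemsim},
\begin{equation*}
\tilde\nu(a)\ \gtrsim\ \frac{1}{\|B_a\|_{A^2(u)}^2}\,\frac{\nu(D_r(a))}{(1-|a|^2)^4}\ \sim\ \frac{\nu(D_r(a))}{\mu(D_r(a))}.
\end{equation*}
Thus $\tilde\nu(a)\to0$ forces $\nu(D_r(a))/\mu(D_r(a))\to0$ as $|a|\to1^-$, which by Theorem \ref{tcompcarchar} is exactly the statement that $\nu$ is a $\mu$-vanishing Carleson measure. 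Together with (a)$\Leftrightarrow$(c) and (a)$\Rightarrow$(b), this closes the loop and yields the full equivalence.

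I expect the main obstacle to be the weak convergence $\hat B_a\to0$ used in (a)$\Rightarrow$(b), which reduces to verifying $\|B_a\|_{A^2(u)}\to\infty$ as $|a|\to1^-$; this is precisely where the $B_2$ hypothesis does real work, through the two-sided estimate of Lemma \ref{lemsim} and the fact that $\mu(D_r(a))$ decays strictly more slowly than $(1-|a|)^4$. Everything else is either the soft $i^\ast i$ factorization with polar decomposition or a direct rerun of the Carleson estimate of Proposition \ref{p3}.
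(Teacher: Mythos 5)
Your argument is correct, and two of its three legs coincide with the paper's: the implication (b)$\Rightarrow$(c) is verbatim the paper's restriction-to-$D_r(a)$ estimate combined with Lemma \ref{lemsim} and Theorem \ref{tcompcarchar}, and your (a)$\Rightarrow$(b) is the paper's argument in a different dressing --- the paper proves weak nullity of the normalized kernels by showing they tend to $0$ locally uniformly while staying norm-bounded, whereas you get it from density of $\{K_u(\cdot,w)\}$ plus $\|B_a\|_{A^2(u)}\to\infty$; both reduce to the same quantitative fact. Where you genuinely depart is (a)$\Leftrightarrow$(c): the paper only proves (c)$\Rightarrow$(a), by using Corollary \ref{cinner} to get $\|T_\nu f\|_{A^2(u)}\lesssim\|f\|_{L^2(\nu)}$ and then invoking compactness of the inclusion $i$, and it recovers (a)$\Rightarrow$(c) only through the chain (a)$\Rightarrow$(b)$\Rightarrow$(c). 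Your factorization $T_\nu=i^\ast i$ packages the same identity more structurally and buys the converse (a)$\Rightarrow$(c) by soft operator theory ($|i|=(i^\ast i)^{1/2}$ compact, $i=U|i|$); this is clean and correct, though redundant once the cycle closes, and one should note that writing $i^\ast$ at all presupposes $i$ bounded --- which Corollary \ref{cinner} with $f=g$ supplies, since $\|g\|_{L^2(\nu)}^2=\langle T_\nu g,g\rangle_{A^2(u)}\le\|T_\nu\|\,\|g\|_{A^2(u)}^2$. The one place you assert rather than prove is exactly the point you flag: that $\mu(D_r(a))$ decays strictly more slowly than $(1-|a|)^4$, equivalently $\|B_a\|_{A^2(u)}\to\infty$. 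This is where the paper applies Jensen's (really Cauchy--Schwarz) inequality, $A(D_r(a))^2\le\mu(D_r(a))\int_{D_r(a)}u^{-1}\,dA$, so that $\mu(D_r(a))(1-|a|)^{-4}\gtrsim\bigl(\int_{D_r(a)}u^{-1}\,dA\bigr)^{-1}$; since $B_2$ applied to the full disc gives $u^{-1}\in L^1(\D)$, absolute continuity of the integral forces $\int_{D_r(a)}u^{-1}\,dA\to0$ as $|a|\to1^-$, and the claim follows. With that line inserted, your proof is complete.
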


\begin{proof}

\noindent (a)$\Rightarrow$ (b)

Notice that by Jensen's inequality we have that \[\mu(D_r(a))\gtrsim (1-|a|^2)^4\left(\int_{D_r(a)}u^{-1}(z)dA(z)\right)^{-1}\] and consequently using Lemma \ref{lemsim} we get \[|b_a(w)|\lesssim |1-\overline{a}z|^{-2} \left(\int_{D_r(a)}u^{-1}(z)dA(z)\right)\longrightarrow 0\] as $|a|\to 1^-$. Thus, the family $\{b_a\}_{a\in\D}$ converges to zero weakly in
$A^2(u)$ as $|a|\to1^-$. We also know from the proof of theorem \ref{tboundbere} that $\displaystyle\tilde{\nu}(a)=\langle T_\nu b_a, b_a \rangle_{A^2(u)}$. Consequently $\tilde{\nu}(a)\leq \|T_\nu b_a\|_{A^2(u)}$ which, since $T_\nu$ is compact, converges to zero as $|a|\to 1^-$.\\

\noindent (b) $\Rightarrow$ (c)

We have that $\tilde{\nu}(a)=\int_{\D}|b_a(z)|^2 d\nu(z)$, and by lemma \ref{lemsim} we obtain that $$\tilde{\nu}(a)\geq \int_{D_r(a)}|b_a(z)|^2 d\nu(z) \sim\frac{\nu(D_r(a)))}{(1-|a|^2)^4 \|B_a\|^2_{A^2(u)}} \sim \frac{\nu(D_r(a))}{\mu(D_r(a))}, $$ then (c) follows from
(b).\\

\noindent (c) $\Rightarrow$ (a)

By corollary \ref{cinner} and Cauchy-Schwarz inequality we have that:

\begin{eqnarray*}
\|T_\nu f\|_{A^2(u)} &=& \sup\{|\langle T_\nu f, g \rangle_{A^2(u)}|: \|g\|_{A^2(u)}=1\}\\
                     &=& \sup\{|\langle f, g \rangle_{L^2(\nu)}|: \|g\|_{A^2(u)}=1\}\\
                     &\leq& \|f\|_{L^2(\nu)}\sup\{\|g\|_{L^2(\nu)}: \|g\|_{A^2(u)}=1\}\\
\end{eqnarray*}
and since $\nu$ is a $\mu$-Carleson measure, then $\|g\|_{L^2(\nu)}\lesssim \|g\|_{A^2(u)}$ for every
$g\in A^2(u)$, so $$\|T_\nu f\|_{A^2(u)} \lesssim \|f\|_{L^2(\nu)} \qquad {\mathrm for ~~all} ~~ f \in
A^2(u).$$

Now, if $f_n$ is a sequence in $A^2(u)$ that converges to zero weakly, then by the compactness of
$i:A^2(u)\to L^2(\nu)$ we have that $\|f_n\|_{L^2(\nu)}$ converges to zero. Therefore, $\|T_\nu
f_n\|_{A^2(u)}$ converges to zero and so, $T_\nu$ is compact on $A^2(u)$.
\end{proof}

We have used the fact that the family of normalized Bergman kernels $\{b_a\}$ converges to zero weakly as $|a|\to 1^-$. This is also true for the family of normalized reproducing kernels of $A^2(u)$.

\begin{pro}
If $u$ satisfies condition $B_2$, then $\displaystyle\frac{K_u(\cdot,a)}{\|K_u(\cdot,a)\|_{A^2(u)}}$ converges to zero weakly in $A^2(u)$.
\end{pro}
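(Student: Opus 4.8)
The plan is to convert weak convergence into a scalar decay statement using the reproducing property, and then to play the growth of the kernel norm against the decay of $g(a)$. Write $k_a:=K_u(\cdot,a)/\|K_u(\cdot,a)\|_{A^2(u)}$, so that $\|k_a\|_{A^2(u)}=1$. For \emph{every} $g\in A^2(u)$ the reproducing property gives
$$\langle g,k_a\rangle_{A^2(u)}=\frac{\langle g,K_u(\cdot,a)\rangle_{A^2(u)}}{\|K_u(\cdot,a)\|_{A^2(u)}}=\frac{g(a)}{\|K_u(\cdot,a)\|_{A^2(u)}}.$$
Since this identity holds for all $g$, no density argument is needed: it suffices to show that $g(a)/\|K_u(\cdot,a)\|_{A^2(u)}\to 0$ as $|a|\to 1^-$ for each fixed $g\in A^2(u)$.

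First I would bound the kernel norm from below. Because $A^2(u)$ is a reproducing kernel Hilbert space, $\|K_u(\cdot,a)\|_{A^2(u)}^2=K_u(a,a)=\sup_{f\neq 0}|f(a)|^2/\|f\|_{A^2(u)}^2$. Testing against the Bergman kernel $B_a$, which lies in $A^2(u)$ by Lemma \ref{lemsim}, and using $B_a(a)=(1-|a|^2)^{-2}$ together with $\|B_a\|_{A^2(u)}^2\sim \mu(D_r(a))/(1-|a|)^4$, yields
$$\|K_u(\cdot,a)\|_{A^2(u)}^2\ \ge\ \frac{|B_a(a)|^2}{\|B_a\|_{A^2(u)}^2}\ \gtrsim\ \frac{1}{\mu(D_r(a))}.$$
Combining this with the previous display gives $\left|\langle g,k_a\rangle_{A^2(u)}\right|^2\lesssim |g(a)|^2\,\mu(D_r(a))$, so the entire problem reduces to proving that $|g(a)|^2\mu(D_r(a))\to 0$ as $|a|\to 1^-$.

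To finish I would invoke the localized form of the pointwise estimate recorded in the Remark: there exist $r'\in(r,1)$ and $C>0$ such that $|g(a)|^2\mu(D_r(a))\le C\int_{D_{r'}(a)}|g|^2u\,dA$ for all $g$ and all $a$. Granting this, fix $g$, so that $\int_\D|g|^2u\,dA<\infty$. For each fixed $z\in\D$ one has $\rho(z,a)\to 1$ as $|a|\to 1^-$, hence $z\notin D_{r'}(a)$ once $|a|$ is close enough to $1$; thus $\chi_{D_{r'}(a)}\to 0$ pointwise and is dominated by the integrable function $|g|^2u$. Dominated convergence then forces $\int_{D_{r'}(a)}|g|^2u\,dA\to 0$, whence $|g(a)|^2\mu(D_r(a))\to 0$ and therefore $\langle g,k_a\rangle_{A^2(u)}\to 0$. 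As $g$ was arbitrary, $k_a\to 0$ weakly.

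The main obstacle is the localized estimate itself, since the Remark only supplies the global bound $|g(a)|^2\le C\mu(D_r(a))^{-1}\|g\|_{A^2(u)}^2$, which is merely bounded and does not decay. I expect to obtain the local version exactly as in Luecking's proof of the global one: subharmonicity of $|g|^2$ yields a sub-mean-value bound for $|g(a)|^2$ over a Euclidean disc of radius comparable to $1-|a|$, and the $B_2$ condition is used to pass from the unweighted average to the weighted integral $\int_{D_{r'}(a)}|g|^2u\,dA$ at the cost of the factor $\mu(D_r(a))^{-1}$. The only care required is in matching the radii $r<r'$ and tracking the comparability constants, which are harmless.
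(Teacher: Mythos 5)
Your proof is correct, but it follows a genuinely different route from the paper. The paper never touches the reproducing identity $\langle g,k_a\rangle=g(a)/\|K_u(\cdot,a)\|_{A^2(u)}$; instead it uses the duality isomorphism $T:A^2(u)\to A^2(u^{-1})$ from the Remark after Theorem \ref{t3}, which sends $K_u(\cdot,a)/\|K_u(\cdot,a)\|_{A^2(u)}$ to $B_a/\|K_u(\cdot,a)\|_{A^2(u)}$; it then shows via Lemma \ref{lemsim} (and the equivalence $\|K_u(\cdot,a)\|_{A^2(u)}\sim\|B_a\|_{A^2(u^{-1})}$) that this family is norm-bounded in $A^2(u^{-1})$ and tends to zero uniformly on compact subsets, hence weakly, and finally pulls the weak convergence back through the bounded operator $T^{-1}$. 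Your argument stays entirely inside $A^2(u)$: the lower bound $\|K_u(\cdot,a)\|_{A^2(u)}^2\ge |B_a(a)|^2/\|B_a\|_{A^2(u)}^2\gtrsim \mu(D_r(a))^{-1}$ is a clean use of the RKHS extremal characterization together with Lemma \ref{lemsim}, and the dominated-convergence endgame is sound because $\rho(z,a)\to 1$ for fixed $z$. What your route buys is self-containedness and an explicit quantitative bound $|\langle g,k_a\rangle|^2\lesssim\int_{D_{r'}(a)}|g|^2u\,dA$; what it costs is the localized pointwise estimate, which is not stated in the paper (the Remark only records the global form) but is indeed exactly what Luecking proves en route to it. One small correction to your sketch of that lemma: the Cauchy--Schwarz step that trades the unweighted average for $\mu(D_r(a))^{-1}\int_{D_{r'}(a)}|g|^2u\,dA$ via the $B_2$ condition works when you apply the sub-mean-value property to the subharmonic function $|g|$ and then square, not to $|g|^2$ directly (applying it to $|g|^2$ and then inserting $u^{1/2}u^{-1/2}$ produces a fourth power). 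With that adjustment the argument is complete.
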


\begin{proof}
We use the same notation as in the observation after theorem \ref{t3}. The operator $T:A^2(u)\to A^2(u^{-1})$
maps $\displaystyle\frac{K_u(\cdot,a)}{\|K_u(\cdot,a)\|_{A^2(u)}}$ to
$\displaystyle\frac{B_a}{\|K_u(\cdot,a)\|_{A^2(u)}}$. Also, by lemma \ref{lemsim} we have that
$$\frac{|B_a|}{\|K_u(\cdot,a)\|_{A^2(u)}}\sim \frac{|B_a|}{\|B_a\|_{A^2(u^{-1})}}\lesssim
|B_a|\mu(D_r(a))
$$ which converges to zero uniformly on compact subsets of $\D$. Consequently, the family $\displaystyle
\left\{\frac{B_a}{\|K_u(\cdot, a)\|_{A^2(u)}}\right\}$ is uniformly bounded on $A^2(u^{-1})$ and
converges to zero uniformly on compact subsets of $\D$. Since $A^2(u^{-1})$ is a functional space, then
$\displaystyle\frac{B_a}{\|K_u(\cdot, a)\|_{A^2(u)}}$ converges to zero weakly on $A^2(u^{-1})$.

Now, the operator $T^{-1}:A^2(u^{-1})\to A^2(u)$ is bounded, so
$\displaystyle\frac{K_u(\cdot,a)}{\|K_u(\cdot,a)\|_{A^2(u)}}$ converges to zero weakly in $A^2(u)$.
\end{proof}

\end{document}